\numberwithin{equation}{section}
\newtheorem{theorem}{Theorem}
\newtheorem{prop}[theorem]{Proposition}
\newtheorem{cor}[theorem]{Corollary}
\newcommand{\taub}{\bar{\tau}}
\newcommand{\cset}{\langle c\rangle_{\mathbb{Z}}^{\perp}}
\newcommand{\transpose}{^{\mathrm{T}}}
\newcommand{\thetaab}{\vartheta_{a,b}}
\newcommand{\Bone}{B(c_{1},\nu)}
\newcommand{\Btwo}{B(c_{2},\nu)}
\newcommand{\Qone}{Q(c_{1})}
\newcommand{\mybeta}{\beta\left(-\frac{B(c,\nu)^{2}}{Q(c)}\tau_{2}\right)}
\newcommand{\betaone}{\beta\left(-\frac{\Bone^{2}}{\Qone}\tau_{2}\right)}
\newcommand{\sgn}{\mathrm{sgn}}
\newcommand{\sgnsgn}{\big\{\sgn\Bone-\sgn\Btwo\big\}}
\newcommand{\rme}{\mathrm{e}}
\newcommand{\sltwoz}{$SL(2,\mathbb{Z})$}
\newcommand{\sumZ}{\sum_{n\in\mathbb{Z}}}
\begin{document}

\title[Modular Transformations]{Modular Transformations of Ramanujan's Tenth Order Mock Theta Functions}
\author{Wynton Moore}
\address{Enrico Fermi Institute and Department of Physics, University of Chicago, Chicago, IL 60637, USA}
\email{wyntonmoore@uchicago.edu}

\subjclass[2000]{11B65 (primary), 11F27, 11F37, 33D15 (secondary).}
\keywords{Mock theta function, mock modular form, indefinite theta series, Mordell integral.}
\thanks{Support from the US Department of State through the Fulbright Science and Technology Award, and from NSF grant 0855039, is gratefully acknowledged.}

\begin{abstract}
The modular transformations of Ramanujan's tenth order mock theta functions are computed, beginning from Choi's Hecke-type identites and using Zwegers' results on indefinite theta series. Explicit completions and shadows are found as an intermediate step. Our result for the modular transformations confirms numerical work by Gordon and McIntosh, and a recent conjecture by Cheng, Duncan, and Harvey.
\end{abstract}

\maketitle

\section{Introduction}

Mock theta functions were introduced in 1920 by Ramanujan in his final letter to Hardy (reproduced in \cite{RamanujanLost}, p127-131). The letter contained examples of order three, five, and seven, although the concept of mock theta functions and their ``order" was not fully defined. Identities between the functions were also presented. Many authors have since written on mock theta functions, in particular deriving their modular behavior and proving many of Ramanujan's identities (see, for example, \cite{Watson1,Watson2,Andrews,GordonMcIntosh}; a review is given in \cite{GordonMcIntoshSurvey}). However a general theory was not known until the work of Zwegers in 2002 \cite{ZwegersThesis,Zwegers3rd}, and its subsequent development by numerous authors \cite{ZagierBourbaki,BringmannOno2006,BringmannOno2010,EHmock1,EHmock2,Troost,Manschot}. There have since been applications in many areas of mathematics, such as combinatorics and characters of Lie superalgebras. Some recent applications in physics will be discussed shortly.
\\
\\
The previously known mock theta functions can now be characterized as examples of more general objects called mock modular forms. A \textit{mock modular form} of weight $k$ \cite{ZagierBourbaki} is a holomorphic function $F(\tau)$ on the upper half-plane $\mathbb{H}$, which is not itself modular. It can be completed to a real-analytic modular form $H(\tau)$ of weight $k$ by addition of a particular non-holomorphic correction term,
\begin{equation*}
G(\tau)=(i/2)^{k-1}\int_{-\taub}^{\infty}dz\frac{\overline{g(-\bar{z})}}{(z+\tau)^{k}}.
\end{equation*}
The function $g(\tau)$, called the \textit{shadow} of $F(\tau)$, is a holomorphic modular form of weight $2-k$. Each mock modular form has its own shadow, so they form a pair. (An equivalent characterization is to begin with a real-analytic modular form $H$ of weight $k$, and require that $(\mathrm{Im}\tau)^{k}\partial_{\taub}H$ be an anti-holomorphic modular form of weight $2-k$. Identifying this with $\bar{g}$, the difference $F=H-G$ is then a mock modular form. So it is clear that mock modular forms are a very particular class of non-modular functions.)\\
\\
The function $H=F+G$ is called the \textit{completion}. The mock modular form $F$ may be vector-valued, in which case $H$ is vector-valued also. Then $H(\tau)$ transforms as
\begin{equation*}
H(\gamma\tau)=(c\tau+d)^{k}\nu(\gamma)H(\tau),\qquad{\,}\gamma=\begin{pmatrix}a & b\\ c & d\end{pmatrix}\in\Gamma,
\end{equation*}
where $\Gamma$ is the modular group $SL(2,\mathbb{Z})$ or some congruence subgroup. The matrix-valued function $\nu(\gamma)$ is called the multiplier system or representation. We say that $F(\tau)$ is a mock modular form of weight $k$ for $\Gamma$ with multiplier system $\nu$.\\
\\
In general $F$ and $g$ are allowed to be weakly holomorphic, meaning they may have exponential growth at cusps. Going from the mock modular form to the completion, holomorphicity is traded for modularity: $F$ is holomorphic but not quite modular, while $H$ is modular but not quite holomorphic. Mock modular forms have interpretations in terms of Lerch sums, indefinite theta series, and meromorphic Jacobi forms. See \cite{ZagierBourbaki,Folsom} for reviews. Many of the previously known mock theta functions have been shown to be vector-valued mock modular forms of weight $1/2$ for \sltwoz, whose shadows are weight-$3/2$ unary theta series.\\
\\
The notation above is for compatibility with Zwegers' treatment of other mock theta functions \cite{Zwegers3rd,ZwegersThesis}, and differs slightly from more recent discussions of mock modular forms in the literature. Usually the mock modular form itself is denoted by $h$, the completion by $\hat{h}$, the shadow by $g$, and its integral by $g^{*}$.\\
\\
\\
The tenth order mock theta functions were contained in Ramanujan's lost notebook \cite{RamanujanLost} rediscovered in 1976. They are
\begin{align*}
\phi(q)&=\sum_{n\geq 0}\frac{q^{n(n+1)/2}}{(q;q^{2})_{n+1}},\qquad{\,}
\psi(q)=\sum_{n\geq 0}\frac{q^{(n+1)(n+2)/2}}{(q;q^{2})_{n+1}},\\
X(q)&=\sum_{n\geq 0}\frac{(-1)^{n}q^{n^{2}}}{(-q;q)_{2n}},\qquad{\,}
\chi(q)=\sum_{n\geq 0}\frac{(-1)^{n}q^{(n+1)^{2}}}{(-q;q)_{2n+1}},
\end{align*}
where the $q$-Pochhammer symbol is
\begin{equation*}
(a;q)_{n}=(1-a)(1-aq)\cdots(1-aq^{n-1}).
\end{equation*}
Ramanujan presented  eight identities between these functions, which were proved in a series of papers by Choi \cite{Choi1,Choi2,Choi3,Choi4}, and revisited by Zwegers \cite{Zwegers10th}. Gordon and McIntosh (\cite{GordonMcIntosh}, p216; repeated in \cite{GordonMcIntoshSurvey}) found numerical evidence for the modular transformations of the tenth order mock theta functions, but their result has not been proved until now.
\\
\\
Our main result is the following theorem, which expresses the numerical result of \cite{GordonMcIntosh} in terms of mock modular forms.

\begin{theorem}\label{maintheorem}
The vectors
\begin{equation*}
F^{(1)}(\tau)=\begin{pmatrix}q^{1/10}\phi(q^{1/2})\\ q^{-1/10}\psi(q^{1/2})\\ q^{1/10}\phi(-q^{1/2})\\ q^{-1/10}\psi(-q^{1/2})\\ q^{-1/40}X(q)\\ q^{-9/40}\chi(q)\end{pmatrix},\qquad{\,}F^{(2)}(\tau)=\begin{pmatrix}\sqrt{2}q^{1/5}\phi(q)\\ \sqrt{2}q^{-1/5}\psi(q)\\ q^{-1/80}X(q^{1/2})\\ q^{-9/80}\chi(q^{1/2})\\ q^{-1/80}X(-q^{1/2})\\ q^{-9/80}\chi(-q^{1/2})\end{pmatrix},
\end{equation*}
where $q=\rme^{2\pi i\tau}$ and $\tau\in\mathbb{H}$, are each mock modular forms of weight $1/2$ under the modular group \sltwoz. Under the generators of the modular group, $S:\tau\rightarrow-1/\tau$ and $T:\tau\rightarrow\tau+1$, they transform as
\begin{equation}\label{Stheorem}
\begin{alignedat}{2}
F^{(1,2)}(-1/\tau)&=\sqrt{-i\tau}M^{(1,2)}F^{(1,2)}(\tau)+\frac{1}{\sqrt{-i\tau}}J^{(1,2)}(\pi i/\tau),\\
F^{(1,2)}(\tau +1)&=T^{(1,2)}F^{(1,2)}(\tau),
\end{alignedat}
\end{equation}
where the multiplier system is
\begin{align*}
M^{(1)}&=\frac{2}{\sqrt{5}}\begin{pmatrix} & & & & \sin\frac{2\pi}{5} & -\sin\frac{\pi}{5}\\
 & & & & \sin\frac{\pi}{5} & \sin\frac{2\pi}{5}\\
 &  & \sin\frac{2\pi}{5} & \sin\frac{\pi}{5} &  & \\
  &  & \sin\frac{\pi}{5} & -\sin\frac{2\pi}{5} &  & \\
\sin\frac{2\pi}{5} & \sin\frac{\pi}{5} &  &  &  & \\
-\sin\frac{\pi}{5} & \sin\frac{2\pi}{5} &  &  &  & \end{pmatrix},\\
M^{(2)}&=\frac{2}{\sqrt{5}}\begin{pmatrix}  &  & \sin\frac{2\pi}{5} & -\sin\frac{\pi}{5} & & \\
 &  & \sin\frac{\pi}{5} & \sin\frac{2\pi}{5} & & \\
\sin\frac{2\pi}{5} & \sin\frac{\pi}{5} & & & & \\
-\sin\frac{\pi}{5} & \sin\frac{2\pi}{5} & & & & \\
 & & & & \sin\frac{\pi}{5} & -\sin\frac{2\pi}{5}\\
 & & & & -\sin\frac{2\pi}{5} & -\sin\frac{\pi}{5}\end{pmatrix},
\end{align*}
and
\begin{equation*}
\setlength{\arraycolsep}{3pt}
\renewcommand{\arraystretch}{0.8}
T^{(1)}=\begin{pmatrix}  &  & \zeta_{10} &  &  & \\
 &  &  & \zeta_{10}^{-1} &  & \\
\zeta_{10} &  & & &  & \\
 & \zeta_{10}^{-1} &  &  &  & \\
 &  & & & \zeta_{40}^{-1} & \\
 &  & & & & \zeta_{40}^{-9} \end{pmatrix},\;\;\;
T^{(2)}=\begin{pmatrix} \zeta_{5} &  & &  &  & \\
 & \zeta_{5}^{-1} &  &  &  & \\
 &  & & & \zeta_{80}^{-1} & \\
 &  &  &  &  & \zeta_{80}^{-9} \\
 &  & \zeta_{80}^{-1} & & & \\
 &  & & \zeta_{80}^{-9} & & \end{pmatrix},
 \setlength{\arraycolsep}{6pt}
\renewcommand{\arraystretch}{1}
\end{equation*}
with $\zeta_{n}=\rme^{2\pi i/n}$. The correction terms $J^{(1,2)}(\pi i/\tau)$ in the $S$-transformation \eqref{Stheorem} can be expressed in terms of Mordell integrals
\begin{equation*}
K_{j}(\beta)=\int_{0}^{\infty}dx\rme^{-5\beta x^{2}}\frac{\cosh{j\beta x}}{\cosh{5\beta x}},\qquad{\,} L_{j}(\beta)=\int_{0}^{\infty}dx\rme^{-5\beta x^{2}}\frac{\sinh{j\beta x}}{\sinh{5\beta x}},
\end{equation*}
as
\begin{equation*}
J^{(1)}(\beta)=\sqrt{20}\begin{pmatrix} -K_{1}(\beta)\\ -K_{3}(\beta)\\ L_{1}(\beta)\\ -L_{3}(\beta)\\ L_{4}(\beta)\\ L_{2}(\beta)\end{pmatrix},\qquad{\,}J^{(2)}(\beta)=\sqrt{40}\begin{pmatrix} -\sqrt{2}K_{1}(2\beta)\\ -\sqrt{2}K_{3}(2\beta)\\  \frac{1}{2}L_{4}(\beta/2)\\ \frac{1}{2}L_{2}(\beta/2)\\ K_{9/2}(2\beta)-K_{1/2}(2\beta)\\ K_{3/2}(2\beta)+K_{7/2}(2\beta)\end{pmatrix},
\end{equation*}
and they transform as
\begin{equation*}
J^{(1,2)}(-\pi i\tau)=-(-i\tau)^{-3/2}M^{(1,2)}J^{(1,2)}(\pi i/\tau).
\end{equation*}
\end{theorem}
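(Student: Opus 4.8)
The plan is to begin from Choi's Hecke-type identities for the tenth order functions \cite{Choi1,Choi2,Choi3,Choi4} (recast into the indefinite-theta language of Zwegers \cite{ZwegersThesis,Zwegers10th}) and to feed them into Zwegers' machinery for indefinite theta series of signature $(1,1)$. Choi expresses each of $\phi,\psi,X,\chi$, multiplied by a suitable rational power of $q$, as a double sum $\sum_{n}\tfrac12\{\sgn(\cdots)-\sgn(\cdots)\}\,q^{Q(n)}$ over a sublattice of $\mathbb{Z}^{2}$, with $Q$ an indefinite quadratic form. The first task is to normalise these so that each component of $F^{(1)}$ and $F^{(2)}$ appears in the canonical shape used by Zwegers, reading off the lattice, the form $Q$, the characteristics $a,b$, and the two auxiliary vectors $c_{1},c_{2}$ with $Q(c_{1}),Q(c_{2})<0$. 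Because the tenth order functions only close under $S$ when grouped into the two six-dimensional multiplets of the theorem, and because each multiplet mixes functions evaluated at $q$, at $q^{1/2}$, and at $-q^{1/2}$, the relevant arithmetic is governed simultaneously by residues modulo $5$ and modulo $8$; this is the origin of the $\sqrt{2}$ factors, of the $\zeta_{40}$ and $\zeta_{80}$ in $T^{(1,2)}$, and of the different dilations (e.g.\ $2\beta$ versus $\beta/2$) that occur within a single vector $J^{(2)}$.

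With the data in canonical form, Zwegers' theorem on indefinite theta series attaches to each component a non-holomorphic correction built from the incomplete integral $\beta(x)=\int_{x}^{\infty}u^{-1/2}\rme^{-\pi u}\,du$, so that the completion $H^{(1,2)}=F^{(1,2)}+G^{(1,2)}$ is a vector-valued real-analytic modular form of weight $1/2$ for \sltwoz; this establishes that $F^{(1,2)}$ are mock modular forms. The transformation matrices are then extracted from Zwegers' explicit transformation formulas for the completed series. The $T$-transformation is immediate, since $T^{(1,2)}$ merely records the leading $q$-powers. The $S$-transformation produces, in each component, a finite exponential (Gauss) sum over the discriminant form, which must be evaluated and reorganised into the stated entries $\tfrac{2}{\sqrt5}\sin(k\pi/5)$ of $M^{(1,2)}$; keeping track of which of the six components are interchanged by $S$ is the combinatorial core of this step. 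Differentiating $H^{(1,2)}$ with respect to $\taub$ then identifies the shadows as the promised weight-$3/2$ unary theta series.

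It remains to pin down the correction terms $J^{(1,2)}$ and their own transformation law. Writing $G^{(1,2)}=H^{(1,2)}-F^{(1,2)}$, the failure of $F^{(1,2)}$ to transform like a modular form under $S$ is exactly $\sqrt{-i\tau}\,M^{(1,2)}G^{(1,2)}(\tau)-G^{(1,2)}(-1/\tau)$, which by the modularity of $H^{(1,2)}$ equals $\tfrac{1}{\sqrt{-i\tau}}J^{(1,2)}(\pi i/\tau)$. Since the shadow is a unary theta series, this difference is a period integral of that theta series against a Gaussian kernel; collapsing the theta sum (equivalently, expanding $1/\cosh$ and $1/\sinh$ as geometric series and performing the Gaussian integrals) turns it precisely into the Mordell integrals $K_{j}(\beta)$ and $L_{j}(\beta)$, and one reads off the vectors $J^{(1)},J^{(2)}$ in the stated basis. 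Finally, the transformation $J^{(1,2)}(-\pi i\tau)=-(-i\tau)^{-3/2}M^{(1,2)}J^{(1,2)}(\pi i/\tau)$ need not be proved directly: substituting $\tau\mapsto-1/\tau$ in the first line of \eqref{Stheorem} and using $S^{2}=\mathrm{id}$ on $\mathbb{H}$ forces both $\big(M^{(1,2)}\big)^{2}=I$ and exactly this relation for $J^{(1,2)}$, so it comes for free once $M^{(1,2)}$ is known (it is also recognisable as the classical modular transformation of the Mordell integral).

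I expect the main obstacle to be the two intertwined computations in the middle: matching Choi's double sums to Zwegers' canonical indefinite-theta data with every characteristic, sign, and branch of the square root correct, and then carrying the $S$-transformation through the Gauss sums and the resulting Mordell integrals cleanly enough to recognise the closed forms $M^{(1,2)}$ and $J^{(1,2)}$ — the delicate point being to get the component pairing, the overall phases, and the normalisations right simultaneously across a six-dimensional, multi-level vector.
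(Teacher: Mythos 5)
Your proposal follows essentially the same route as the paper: Choi's Hecke-type identities are fed into Zwegers' indefinite theta machinery to obtain the completions and shadows, the matrices $M^{(1,2)}$ and $T^{(1,2)}$ are extracted from Zwegers' transformation formulas together with the classical theta functions, the corrections are identified with Mordell integrals through the period integral of the unary shadow (the paper does this via Zwegers' partial-fraction and Gaussian-integral lemmas rather than a geometric-series expansion, a cosmetic difference), and the transformation law for $J^{(1,2)}$ is deduced from $S^{2}=\mathds{1}$ consistency, noting that $\big(M^{(1,2)}\big)^{2}=I$ is a direct check on the explicit matrices rather than a consequence of the functional equation alone. This matches the paper's proof in structure and substance.
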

\noindent\\
The conventions $\zeta_{n}=\rme^{2\pi i/n}$ ($n\in\mathbb{Z}_{>0}$) and $q=\rme^{2\pi i\tau}$ will be used throughout. For comparison of Theorem \ref{maintheorem} with p216 of \cite{GordonMcIntosh}, which uses the older convention $q=\rme^{\pi i\tau}$, note that $\frac{2}{\sqrt{5}}\sin\frac{\pi}{5}=\sqrt{\frac{5-\sqrt{5}}{10}}$ and $\frac{2}{\sqrt{5}}\sin\frac{2\pi}{5}=\sqrt{\frac{5+\sqrt{5}}{10}}$. Also, the $S$-transformation of the first four components of $F^{(2)}(\tau)$ follows from the $S$-transformation of $F^{(1)}(\tau)$.\\
\\
The proof of Theorem \ref{maintheorem} begins from identites of Choi \cite{Choi1,Choi2} and follows chapter four of Zwegers' PhD thesis \cite{ZwegersThesis}. In section \ref{ThetaSec} the indefinite theta series studied in \cite{ZwegersThesis} are briefly reviewed, and used to find the completions and shadows of $F^{(1,2)}(\tau)$. In section \ref{ModularSec} the modular transformations of the completions and shadows are worked out, resulting in a proof of Theorem \ref{maintheorem}. The vectors $F^{(1,2)}(\tau)$ were chosen for the theorem because they transform irreducibly under \sltwoz. The $S$-transformation is the main result, as the $T$-transformation follows immediately from $T:\;q\rightarrow\rme^{2\pi i}q$.\\
\\
\\
Our results also confirm a recent conjecture by Cheng, Duncan, and Harvey \cite{CDH}, concerning the modular transformations of tenth order mock theta functions. Their conjecture motivated this paper, and the context for it will now be discussed. Recently a number of connections have arisen between mock theta functions and mathematical physics, beginning with the work of Eguchi and Hikami on superconformal algebras and hyperk\"{a}hler manifolds \cite{EHmock1,EHmock2,EHentropyN4,EHentropyN2}. There have been connections to non-compact elliptic genera \cite{Troost} and black hole state counting \cite{DMZ}. Finally, a series of correspondences between mock theta functions and finite groups has been discovered in the Mathieu moonshine conjecture of Eguchi, Ooguri, and Tachikawa \cite{EOT} and its extension by Cheng, Duncan, and Harvey.\\
\\
In one such example \cite{CDH} the finite group $2.L_{2}(5).2$ is related to a certain Jacobi form $\phi^{(5)}(z,\tau)$ of weight zero and index four (given in equation (3.5) of \cite{CDH}). Some McKay-Thompson series for this correspondence are conjecturally identified with tenth order mock theta functions as follows:
\begin{equation}\label{McKayThompson}
\begin{pmatrix}H_{2B}^{(5,1)}\\ H_{2B}^{(5,2)}\\ H_{2B}^{(5,3)}\\ H_{2B}^{(5,4)}\end{pmatrix}(\tau)=-2\begin{pmatrix}q^{-1/20}X(q^{2})\\ q^{-9/20}\chi(q^{2})\\ q^{-1/5}\psi(-q)\\ -q^{1/5}\phi(-q)\end{pmatrix},\qquad{\,}\begin{pmatrix}H_{4C}^{(5,1)}\\ H_{4C}^{(5,3)}\end{pmatrix}(\tau)=-2\begin{pmatrix}q^{-1/20}X(q^{2})\\ q^{-9/20}\chi(q^{2})\end{pmatrix}.
\end{equation}
Here $H_{2B,4C}^{(5,r)}$ with $r=1,2,3,4$ are the McKay-Thompson series for the $2B,4C$ conjugacy classes of $2.L_{2}(5).2$ in its correspondence with $\phi^{(5)}$. The omitted series $H_{4C}^{(5,2)}$ and $H_{4C}^{(5,4)}$ are zero.\\
\\
The correspondence between $2.L_{2}(5).2$ and $\phi^{(5)}$ requires certain modular properties of the McKay-Thompson series and leads to a conjecture for the modular transformations of tenth order mock theta functions \cite{CDH}. Specifically, the two vectors in \eqref{McKayThompson} are conjectured to be mock modular forms of weight $1/2$ under the congruence subgroups $\Gamma_{0}(2)$ and $\Gamma_{0}(4)$, respectively, of $SL(2,\mathbb{Z})$. In section \ref{ConjectureSec} it is shown that this property follows from Theorem \ref{maintheorem}.

\section{Tenth Order Mock Theta Functions as Indefinite Theta Series}\label{ThetaSec}

Our starting point is the following set of identities expressing the tenth order mock theta functions in terms of Hecke-type double sums (these identities are due to Choi (p508 of \cite{Choi1} and p192 of \cite{Choi2}) and were rephrased as follows in \cite{Zwegers10th}):
\begin{align}
\phi(q)&=\frac{1}{\sumZ (-1)^{n}q^{n^{2}}}\bigg(\sum_{r,s\geq 0}-\sum_{r,s<0}\bigg) (-1)^{r+s}q^{r^{2}+3rs+s^{2}+r+s},\label{phi}\\
\psi(q)&=-\frac{q^{2}}{\sumZ (-1)^{n}q^{n^{2}}}\bigg(\sum_{r,s\geq 0}-\sum_{r,s<0}\bigg) (-1)^{r+s}q^{r^{2}+3rs+s^{2}+3r+3s},\notag\\
X(q)&=\frac{2q^{1/8}}{\sumZ q^{(n+1/2)^{2}/2}}\bigg(\sum_{r,s\geq 0}-\sum_{r,s<0}\bigg) q^{2r^{2}+6rs+2s^{2}+r+s},\notag\\
\chi(q)&=2-\frac{2q^{9/8}}{\sumZ q^{(n+1/2)^{2}/2}}\bigg(\sum_{r,s\geq 0}-\sum_{r,s<0}\bigg) q^{2r^{2}+6rs+2s^{2}-3r-3s}.\label{chi}
\end{align}
Double sums of this kind were first systematically studied by Hecke \cite{Hecke}. They can be related to the indefinite theta series studied by Zwegers, but first a couple of modifications will be convenient. On the right side of equation \eqref{chi} for $\chi(q)$ there is a constant term $2$. This can be absorbed into the series by making the following change of limits:
\begin{equation*}
\bigg(\sum_{r,s\geq 0}-\sum_{r,s<0}\bigg)=\bigg(\sum_{r,s>0}-\sum_{r,s\leq 0}\bigg)+\bigg(\sum_{\stackrel{r\in\mathbb{Z}}{s=0}}+\sum_{\stackrel{s\in\mathbb{Z}}{r=0}}\bigg).
\end{equation*}
The resulting extra term in the numerator of \eqref{chi} is just a multiple of the denominator:
\begin{equation*}
2q^{9/8}\bigg(\sum_{\stackrel{r\in\mathbb{Z}}{s=0}}+\sum_{\stackrel{s\in\mathbb{Z}}{r=0}}\bigg) q^{2r^{2}+6rs+2s^{2}-3r-3s}=4\sumZ q^{2(n+1/4)^{2}}=2\sumZ q^{(n+1/2)^{2}/2}
\end{equation*}
(To get the final equality, split the series at right according to $n$ even or odd.) Hence the extra term from the change of limits cancels the $2$ in \eqref{chi}. The original limits can be restored by replacing $r,s\rightarrow -r,-s$.\\
\\
It will also be convenient to split the series for $\phi(q)$ and $\psi(q)$ according to $r,s$ even or odd. For instance, \eqref{phi} becomes
\begin{multline*}
\phi(q)=\frac{1}{\sumZ (-1)^{n}q^{n^{2}}}\bigg(\sum_{r,s\geq 0}-\sum_{r,s<0}\bigg)\\ q^{4r^{2}+12rs+4s^{2}}\left\{ q^{2r+2s}-q^{6r+8s+2}-q^{8r+6s+2}+q^{12r+12s+7}\right\}.
\end{multline*}
Introducing the standard theta functions
\begin{align*}
\theta_{2}(\tau)&=\theta_{2}(0;\tau)=\sumZ q^{(n+1/2)^{2}/2},\\
\theta_{3}(\tau)&=\theta_{3}(0;\tau)=\sumZ q^{n^{2}/2},\\
\theta_{4}(\tau)&=\theta_{4}(0;\tau)=\sumZ(-1)^{n}q^{n^{2}/2},
\end{align*}
we find the following series for the components of $F^{(1)}(\tau)$:
\begin{equation}\label{F1series}
\begin{alignedat}{6}
q^{1/10}\phi(\pm q^{1/2})&=\frac{q^{1/10}}{\theta_{4,3}(\tau)}\bigg(\sum_{r,s\geq 0}-\sum_{r,s<0}\bigg)\\ &\qquad{\;}q^{2r^{2}+6rs+2s^{2}}\left\{ q^{r+s}-q^{3r+4s+1}-q^{4r+3s+1}\pm q^{6r+6s+7/2}\right\},\\
q^{-1/10}\psi(\pm q^{1/2})&=\frac{q^{9/10}}{\theta_{4,3}(\tau)}\bigg(\sum_{r,s\geq 0}-\sum_{r,s<0}\bigg)\\ &\qquad{\;} q^{2r^{2}+6rs+2s^{2}}\left\{ q^{3r+3s}-q^{5r+6s+2}-q^{6r+5s+2}\pm q^{8r+8s+11/2}\right\},\\
q^{-1/40}X(q)&=\frac{2q^{1/10}}{\theta_{2}(\tau)}\bigg(\sum_{r,s\geq 0}-\sum_{r,s<0}\bigg) q^{2r^{2}+6rs+2s^{2}+r+s},\\
q^{-9/40}\chi(q)&=\frac{2q^{9/10}}{\theta_{2}(\tau)}\bigg(\sum_{r,s\geq 0}-\sum_{r,s<0}\bigg) q^{2r^{2}+6rs+2s^{2}-3r-3s}.
\end{alignedat}
\end{equation}
In the first two lines, $+$ ($-$) corresponds to $\theta_{4}$ ($\theta_{3}$). Due to the above manipulations, the series \eqref{F1series} share the same quadratic form $2r^{2}+6rs+2s^{2}$. Similar series with quadratic form $r^{2}+3rs+s^{2}$ can be obtained for the components of $F^{(2)}(\tau)$.

\subsection{Review of Zwegers' Functions}
Zwegers \cite{ZwegersThesis} developed a theory of indefinite theta series for quadratic forms of type $(m-1,1)$ restricted to positive-definite cones. The series \eqref{F1series} are of this kind with $m=2$. Consider an indefinite quadratic form $Q$ and bilinear form $B$ defined by a symmetric $m\times m$ matrix $A$ with integer coefficients:
\begin{equation*}
B(x,y)=x\transpose Ay,\qquad{\;}Q(x)=\frac{1}{2}B(x,x).
\end{equation*}
The matrix $A$ must be of type $(m-1,1)$, that is it must have $(m-1)$ positive eigenvalues and one negative eigenvalue. Now introduce two reference vectors $c_{1},c_{2}\in\mathbb{Z}^{m}$ satisfying
\begin{equation}\label{cconstraints}
Q(c_{1}),Q(c_{2}),B(c_{1},c_{2})<0.
\end{equation}
Then the series
\begin{equation}\label{sgnsgnform}
\sum_{\nu\in\mathbb{Z}^{m}+a}\sgnsgn q^{Q(\nu)}\rme^{2\pi iB(\nu,b)},
\end{equation}
with characteristics $a,b\in\mathbb{R}^{m}$, includes only vectors $\nu$ from a sublattice of the positive-definite cone of $Q(\nu)$. However the series is not modular: following the usual Poisson summation procedure, the sublattice of included vectors is not self-dual.\\
\\
Zwegers' main result on indefinite theta series was to obtain good modular properties by rounding off the step in the $\sgn$ function in a $\tau$-dependent way. Specifically, in the case that $c_{1},c_{2}$ satisfy \eqref{cconstraints}, he replaced
\begin{align}
\sgn B(c,\nu)&\rightarrow\left(\sgn B(c,\nu)\right)\left(1-\beta\left(\frac{B(c,\nu)^{2}}{-Q(c)}\tau_{2}\right)\right)\notag\\
&=E\left(\frac{B(c,\nu)}{\sqrt{-Q(c)}}\tau_{2}^{1/2}\right),\label{sgnreplace}
\end{align}
where $\tau_{2}=\mathrm{Im}\tau$, and  $E(z)$ is the error function
\begin{equation*}
E(z)=2\int_{0}^{z}\rme^{-\pi u^{2}}du,\qquad{\;}z\in\mathbb{C}.
\end{equation*}
For $w\in\mathbb{R}$, $E(w)$ is related to the complementary error function
\begin{equation*}
\beta(v)=\int_{v}^{\infty}\frac{du}{\rme^{\pi u}\sqrt{u}},\qquad{\;}v\in\mathbb{R}_{\geq 0},
\end{equation*}
by $E(w)=\sgn(w)\left(1-\beta(w^{2})\right)$.\\
\\
Making the replacement \eqref{sgnreplace} in \eqref{sgnsgnform} results in the non-holomorphic theta series (Definition 2.1 of \cite{ZwegersThesis})
\begin{equation}\label{thetadef}
\vartheta_{a,b}(\tau)=\sum_{\nu\in\mathbb{Z}^{m}+a}\left\{E\left(\frac{B(c_{1},\nu)}{\sqrt{-Q(c_{1})}}\tau_{2}^{1/2}\right)-E\left(\frac{B(c_{2},\nu)}{\sqrt{-Q(c_{2})}}\tau_{2}^{1/2}\right)\right\}q^{Q(\nu)}\rme^{2\pi iB(\nu,b)}.
\end{equation}
Zwegers showed this has modular transformations (Corollary 2.9 of \cite{ZwegersThesis})
\begin{equation}\label{thetaabtrans}
\begin{alignedat}{2}
\vartheta_{a,b}(-1/\tau)&=\frac{i}{\sqrt{-\mathrm{det}A}}(-i\tau)^{m/2}\rme^{2\pi iB(a,b)}\sum_{p\in A^{-1}\mathbb{Z}^{m}/\mathbb{Z}^{m}}\vartheta_{b+p,-a}(\tau),\\
\vartheta_{a,b}(\tau+1)&=\rme^{-2\pi iQ(a)-\pi ia\transpose\mathrm{diag}A}\vartheta_{a,b+a+\frac{1}{2}A^{-1}\mathrm{diag}A}(\tau).
\end{alignedat}
\end{equation}
Vectors $\nu$ from the negative definite cone of $Q(\nu)$ are included in $\thetaab(\tau)$, but are suppressed by the difference in error functions as $Q(\nu)$ becomes more negative or as $\tau_{2}$ becomes large. The sum over the positive definite cone in \eqref{sgnsgnform} is equal to $\thetaab$ up to a remainder term:
\begin{multline}\label{remainder}
\sum_{\nu\in\mathbb{Z}^{m}+a}\sgnsgn q^{Q(\nu)}\rme^{2\pi iB(\nu,b)}=\thetaab(\tau)\\ -\sum_{\nu\in\mathbb{Z}^{m}+a}\left\{ \left(\sgn \Bone\right)\betaone-(c_{1}\rightarrow c_{2})\right\} q^{Q(\nu)}\rme^{2\pi iB(\nu,b)}.
\end{multline}
Zwegers also showed how to deal with this remainder term. If $c\in\mathbb{Z}^{m}$ is primitive and satifies $Q(c)<0$, then (Proposition 4.3 of \cite{ZwegersThesis})
\begin{multline}\label{Posum}
\sum_{\nu\in\mathbb{Z}^{m}+a}\big(\sgn B(c,\nu)\big)\mybeta q^{Q(\nu)}\rme^{2\pi iB(\nu,b)}\\ =-\sum_{\mu\in P(c)} \theta^{\perp c}_{\mu,b}(\tau)R_{\frac{B(c,\mu)}{2Q(c)},-B(c,b)}\big(-2Q(c)\tau\big).
\end{multline}
Here, $\theta^{\perp c}_{a,b}(\tau)$ is a restricted theta series
\begin{equation*}
\theta^{\perp c}_{a,b}(\tau)=\sum_{\xi\in\cset+a^{\perp c}}q^{Q(\xi)}\rme^{2\pi iB(\xi,b^{\perp c})},
\end{equation*}
where $\perp c$ means orthogonal to $c$ with respect to the metric $B(x,y)$:
\begin{equation*}
x^{\perp c}=x-\frac{B(c,x)}{2Q(c)}c,
\end{equation*}
and $\cset$ is the set of $x\in\mathbb{Z}^{m}$ with $B(c,x)=0$. The set $P(c)$ in \eqref{Posum} consists of all $\nu\in\mathbb{Z}^{m}+a$ with $B(c,\nu)/2Q(c)\in[0,1)$, modulo $\cset$. In other words,
\begin{equation*}
\left\{\nu\in\mathbb{Z}^{m}+a\left\vert\frac{B(c,\nu)}{2Q(c)}\in[0,1)\right.\right\}=\bigsqcup_{\mu\in P(c)}\left(\cset+\mu\right),
\end{equation*}
where $\bigsqcup$ represents disjoint union. Finally, $R_{s,t}(\tau)$ in \eqref{Posum} is a non-holomorphic unary theta series:
\begin{equation*}
R_{s,t}(\tau)=\sum_{\sigma\in\mathbb{Z}+s}\sgn(\sigma)\beta(2\sigma^{2}\tau_{2})q^{-\sigma^{2}/2}\rme^{-2\pi i\sigma t},
\end{equation*}
with $s,t\in\mathbb{R}$. It may be expressed as the period integral of a holomorphic weight-3/2 unary theta series $g_{s,t}$ (Proposition 4.2 of \cite{ZwegersThesis}):
\begin{equation}\label{Rintegral}
R_{s,t}(\tau)=-i\int_{-\taub}^{i\infty}dz\frac{g_{s,-t}(z)}{\sqrt{-i(z+\tau)}},\qquad{\;}g_{s,t}(\tau)=\sum_{\sigma\in\mathbb{Z}+s}\sigma q^{\sigma^{2}/2}\rme^{2\pi i\sigma t}.
\end{equation}
The modular transformations of $g_{s,t}$ are (Proposition 1.15 of \cite{ZwegersThesis})
\begin{equation}\label{gtrans}
g_{s,t}(-1/\tau)=i\rme^{2\pi ist}(-i\tau)^{3/2}g_{t,-s}(\tau),\;\;\;\;g_{s,t}(\tau+1)=\rme^{-\pi is(s+1)}g_{s,t+s+1/2}(\tau).
\end{equation}
Substituting in \eqref{Rintegral}, one finds $R_{s,t}$ has a well-defined transformation under $T$, but not under $S$:
\begin{equation}\label{Rnonmod}
\begin{alignedat}{2}
R_{s,t}(-1/\tau)&=i\sqrt{-i\tau}\int_{-\taub}^{0}dz\frac{(-iz)^{-3/2}g_{s,-t}(-1/z)}{\sqrt{-i(z+\tau)}}\\
&=-i\rme^{-2\pi ist}\sqrt{-i\tau}\left(R_{-t,s}+i\int_{0}^{i\infty}dz\frac{g_{-t,-s}}{\sqrt{-i(z+\tau)}}\right).
\end{alignedat}
\end{equation}
Returning to the original sum over the positive-definite cone, equations \eqref{remainder} and \eqref{Posum} combine to give
\begin{multline}\label{sgnsgnthetaR}
\sum_{\nu\in\mathbb{Z}^{m}+a}\sgnsgn q^{Q(\nu)}\rme^{2\pi iB(\nu,b)}=\thetaab(\tau)\\ -\bigg\{\sum_{\mu\in P(c_{1})} \theta^{\perp c_{1}}_{\mu,b}(\tau)R_{\frac{B(c_{1},\mu)}{2Q(c_{1})},-B(c_{1},b)}\big(-2Q(c_{1})\tau\big)-(c_{1}\rightarrow c_{2})\bigg\},
\end{multline}
provided $c_{1},c_{2}$ are primitive. The non-modularity of the left side is represented on the right by the non-modularity of $R_{s,t}$ \eqref{Rnonmod}.
\\
\\
The functions introduced in this section have the following behavior under shifts of their characteristics (Corollary 2.9 and Proposition 1.15 of \cite{ZwegersThesis}):
\begin{equation}\label{charshifts}
\begin{alignedat}{3}
\vartheta_{a+\lambda,b}=\thetaab \text{  for  }\lambda\in\mathbb{Z}^{m},\qquad{\;}&g_{s+1,t}=g_{s,t},\\
\vartheta_{a,b+\mu}=\rme^{2\pi iB(a,\mu)}\vartheta_{a,b} \text{  for  }\mu\in A^{-1}\mathbb{Z}^{m},\qquad{\;}&g_{s,t+1}=\rme^{2\pi is}g_{s,t},\\
\vartheta_{-a,-b}=-\vartheta_{a,b},\qquad{\;}&g_{-s,-t}=-g_{s,t}.
\end{alignedat}
\end{equation}
The shifts of $R_{s,t}$ are identical to those of $g_{s,t}$ except for $R_{s,t+1}=\rme^{-2\pi is}R_{s,t}$.\\
\\
Only brief definitions of Zwegers' functions, specialized to the case at hand, have been given here. One may consult \cite{ZwegersThesis} for full details.

\subsection{Application to Tenth Order Mock Theta Functions}

The following result gives the completions and shadows of $F^{(1,2)}(\tau)$ in terms of functions introduced above.
\begin{prop}\label{Hgprop}
The vectors $F^{(1,2)}(\tau)$ of Theorem \ref{maintheorem} can be expressed as
\begin{equation*}
F^{(1,2)}(\tau)=H^{(1,2)}(\tau)+G^{(1,2)}(\tau),
\end{equation*}
where
\begin{equation*}
G^{(1,2)}(\tau)=-i\int_{-\taub}^{i\infty}dz\frac{g^{(1,2)}(z)}{\sqrt{-i(z+\tau)}},
\end{equation*}
and the shadow vectors\footnote{We will refer to $g^{(1,2)}(\tau)$ as the shadow, although technically the shadow is $\overline{g^{(1,2)}(-\taub)}$.} $g^{(1,2)}(\tau)$ are integrated component by component. The completions $H^{(1,2)}(\tau)$ are
\renewcommand{\arraystretch}{1.3}
\begin{equation*}
H^{(1)}(\tau)=\begin{pmatrix}\frac{1}{\theta_{4}}\frac{1}{2}\big[\vartheta_{\frac{1}{10}e,0}-\vartheta_{\frac{1}{10}e+\frac{1}{2}e_{x},0}-\vartheta_{\frac{4}{10}e,0}+\vartheta_{\frac{4}{10}e+\frac{1}{2}e_{x},0}\big]\\
\frac{1}{\theta_{4}}\frac{1}{2}\big[\vartheta_{\frac{2}{10}e,0}-\vartheta_{\frac{2}{10}e+\frac{1}{2}e_{x},0}-\vartheta_{\frac{3}{10}e,0}+\vartheta_{\frac{3}{10}e+\frac{1}{2}e_{x},0}\big]\\
\frac{1}{\theta_{3}}\frac{1}{2}\big[\vartheta_{\frac{1}{10}e,0}-\vartheta_{\frac{1}{10}e+\frac{1}{2}e_{x},0}+\vartheta_{\frac{4}{10}e,0}+\vartheta_{\frac{4}{10}e+\frac{1}{2}e_{x},0}\big]\\
\frac{1}{\theta_{3}}\frac{1}{2}\big[-\vartheta_{\frac{2}{10}e,0}-\vartheta_{\frac{2}{10}e+\frac{1}{2}e_{x},0}-\vartheta_{\frac{3}{10}e,0}+\vartheta_{\frac{3}{10}e+\frac{1}{2}e_{x},0}\big]\\
\frac{1}{\theta_{2}}\vartheta_{\frac{1}{10}e,0}\\
\frac{1}{\theta_{2}}\vartheta_{\frac{3}{10}e,0}
\end{pmatrix}(\tau),
\end{equation*}
\begin{equation*}
\renewcommand{\arraystretch}{1.5}
H^{(2)}(\tau)=\begin{pmatrix}\frac{1}{\sqrt{2}\theta_{4}(2\tau)}\zeta_{5}^{-1}\vartheta_{\frac{2}{10}e,\frac{1}{20}e}(\tau/2)\\
\frac{1}{\sqrt{2}\theta_{4}(2\tau)}\zeta_{5}^{-2}\vartheta_{\frac{4}{10}e,\frac{1}{20}e}(\tau/2)\\
\frac{1}{\theta_{2}(\tau/2)}\vartheta_{\frac{1}{10}e,0}(\tau/2)\\
\frac{1}{\theta_{2}(\tau/2)}\vartheta_{\frac{3}{10}e,0}(\tau/2)\\
\frac{1}{\theta_{2}(\frac{\tau +1}{2})}\zeta_{80}^{-3}\vartheta_{\frac{1}{10}e,\frac{1}{20}e}(\tau/2)\\
\frac{1}{\theta_{2}(\frac{\tau +1}{2})}\zeta_{80}^{21}\vartheta_{\frac{3}{10}e,\frac{1}{20}e}(\tau/2)\end{pmatrix},\renewcommand{\arraystretch}{1}\;\;\;\text{where }e=\begin{pmatrix}1\\ 1\end{pmatrix},\,e_{x}=\begin{pmatrix}1\\ 0\end{pmatrix},
\end{equation*}
with
\begin{equation*}
A=\begin{pmatrix}4 & 6\\ 6 & 4\end{pmatrix},\;\;\;c_{1}=\begin{pmatrix}-2\\ 3\end{pmatrix},\;\;\;c_{2}=\begin{pmatrix}-3\\ 2\end{pmatrix}.
\end{equation*}
The shadows $g^{(1,2)}(\tau)$ are
\begin{equation*}
g^{(1)}(\tau)=\sqrt{20}\begin{pmatrix} -g_{\frac{4}{20},0}-g_{\frac{6}{20},0}\\
-g_{\frac{2}{20},0}-g_{\frac{8}{20},0}\\
g_{\frac{4}{20},0}-g_{\frac{6}{20},0}\\
-g_{\frac{2}{20},0}+g_{\frac{8}{20},0}\\
g_{\frac{1}{20},0}-g_{\frac{9}{20},0}\\
g_{\frac{3}{20},0}-g_{\frac{7}{20},0}
\end{pmatrix}(20\tau),\;\;\;
\renewcommand{\arraystretch}{1.2}
g^{(2)}(\tau)=\sqrt{10}\begin{pmatrix}-\sqrt{2}\zeta_{5}^{-1}g_{\frac{8}{20},\frac{1}{2}}\\
\sqrt{2}\zeta_{5}^{2}g_{\frac{4}{20},\frac{1}{2}}\\
g_{\frac{1}{20},0}-g_{\frac{9}{20},0}\\
g_{\frac{3}{20},0}-g_{\frac{7}{20},0}\\
\zeta_{40}^{-1}g_{\frac{1}{20},\frac{1}{2}}-\zeta_{40}^{-9}g_{\frac{9}{20},\frac{1}{2}}\\
\zeta_{40}^{-3}g_{\frac{3}{20},\frac{1}{2}}+\zeta_{40}^{-7}g_{\frac{7}{20},\frac{1}{2}}\end{pmatrix}(10\tau).
\end{equation*}
\renewcommand{\arraystretch}{1}
\end{prop}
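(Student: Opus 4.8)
\emph{Proof proposal.} The plan is to recognise each component of \eqref{F1series} (and of the corresponding series for $F^{(2)}$, which carry the quadratic form $r^{2}+3rs+s^{2}$) as one of Zwegers' indefinite theta sums \eqref{sgnsgnform}, and then run it through \eqref{sgnsgnthetaR}. Take $A$, $c_{1}$, $c_{2}$ as in the statement; then $\det A=-20$, so $A$ is of type $(1,1)$, and $Q(c_{1})=Q(c_{2})=-10$, $B(c_{1},c_{2})=-30$ with $c_{1},c_{2}$ primitive, so \eqref{cconstraints} holds. Because $Ac_{1}=(10,0)\transpose$ and $Ac_{2}=(0,-10)\transpose$ one has $B(c_{1},\nu)=10\nu_{1}$ and $B(c_{2},\nu)=-10\nu_{2}$, hence $\sgn B(c_{1},\nu)-\sgn B(c_{2},\nu)=\sgn\nu_{1}+\sgn\nu_{2}$; for a characteristic $a$ whose two entries lie in $(0,1)$ this is $+2$ on $r,s\ge 0$, $-2$ on $r,s<0$ and $0$ otherwise, so \eqref{sgnsgnform} equals $2\big(\sum_{r,s\ge 0}-\sum_{r,s<0}\big)q^{Q(\nu)}\rme^{2\pi iB(\nu,b)}$. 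Completing the square term by term then fixes the characteristics: a summand $q^{Q(r,s)+\alpha r+\gamma s+\delta}$ comes from the $a$ with $Aa=(\alpha,\gamma)\transpose$ (so $a=\tfrac{k}{10}e$ or $\tfrac{k}{10}e+\tfrac12e_{x}$), the constant $\delta$ being absorbed by $Q(a)$ together with the overall $q$-power; for $F^{(2)}$ a half-integer mismatch is soaked up by $b=\tfrac1{20}e$. The four summands in each $\phi/\psi$ row of \eqref{F1series} are matched this way, using $\vartheta_{a+\lambda,b}=\vartheta_{a,b}$ and $\vartheta_{-a,-b}=-\vartheta_{a,b}$ from \eqref{charshifts} together with the $r\leftrightarrow s$ symmetry of the sum, to produce precisely the bracketed combinations of $\vartheta_{a,0}$ (resp.\ $\vartheta_{a,\frac1{20}e}$) displayed in $H^{(1)},H^{(2)}$.

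Running these sums through \eqref{sgnsgnthetaR} writes each one as $\vartheta_{a,b}(\tau)$ minus a remainder built from the products $\theta^{\perp c_{1}}_{\mu,b}(\tau)\,R(-2Q(c_{1})\tau)$ of \eqref{Posum} and their $c_{1}\to c_{2}$ analogues. Dividing the $\vartheta_{a,b}$ pieces by the appropriate $\theta_{2},\theta_{3},\theta_{4}$ and regrouping with \eqref{charshifts} reproduces $H^{(1,2)}(\tau)$, so it remains to identify the remainder with the non-holomorphic term $G^{(1,2)}(\tau)$. For the present $c_{i}$ one computes $P(c_{1}),P(c_{2})$ directly (two classes each for the relevant characteristics, since $\tfrac{B(c_{1},\nu)}{2Q(c_{1})}=-\nu_{1}/2$ and $\tfrac{B(c_{2},\nu)}{2Q(c_{2})}=\nu_{2}/2$), and evaluates the restricted series: here $\cset$ is the rank-one lattice $\mathbb{Z}(0,1)$ (resp.\ $\mathbb{Z}(1,0)$) on which $Q=2$, so each $\theta^{\perp c_{i}}_{\mu,b}$ is a unary theta $\sum_{n}q^{2(n+\alpha)^{2}}$. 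By the elementary identity $\sum_{n}q^{2n^{2}+n}=\sum_{k\ge 0}q^{k(k+1)/2}$ the cases $\alpha\equiv\pm\tfrac14$ give $\tfrac12\theta_{2}(\tau)$, while $\alpha\equiv 0,\tfrac12$ give $\theta_{3}(4\tau),\theta_{2}(4\tau)$, which combine via $\theta_{3,4}(\tau)=\theta_{3}(4\tau)\pm\theta_{2}(4\tau)$. The point is that, once one sums over the $\vartheta$'s making up a fixed component of $H^{(1,2)}$ and simplifies with these theta identities, the restricted series rebuild exactly the denominators $\theta_{2},\theta_{3},\theta_{4}$ and cancel, leaving a pure combination of $R_{s,t}(-2Q(c_{i})\tau)=R_{s,t}(20\tau)$ (or $R_{s,t}(10\tau)$ after the rescaling $\tau\mapsto\tau/2$ used for $F^{(2)}$).

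Finally one inserts \eqref{Rintegral}: the substitution $z=20w$ gives $R_{s,t}(20\tau)=-i\sqrt{20}\int_{-\taub}^{i\infty}dw\,g_{s,-t}(20w)/\sqrt{-i(w+\tau)}$ (with $\sqrt{10}$ and $g_{s,-t}(10w)$ for $F^{(2)}$). Gathering the resulting unary theta series and forcing their indices into the chosen range with $g_{s+1,t}=g_{s,t}$, $g_{-s,-t}=-g_{s,t}$, $g_{s,t+1}=\rme^{2\pi is}g_{s,t}$ (and $R_{s,t+1}=\rme^{-2\pi is}R_{s,t}$) yields $g^{(1,2)}$ and hence $G^{(1,2)}(\tau)=-i\int_{-\taub}^{i\infty}dz\,g^{(1,2)}(z)/\sqrt{-i(z+\tau)}$. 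For $F^{(2)}$ there is extra bookkeeping around the rescaling $\tau\mapsto\tau/2$: the $r^{2}+3rs+s^{2}$ series become the $Q$-series at $\tau/2$, the denominators turn into $\theta_{4}(2\tau)$, $\theta_{2}(\tau/2)$ and $\theta_{2}(\tfrac{\tau+1}{2})$ — the last two by the $T$-rule of \eqref{thetaabtrans}, which also moves the $-q^{1/2}$ cases into place — and the accumulated phases collapse into the roots of unity $\zeta_{5},\zeta_{40},\zeta_{80}$ of $H^{(2)},g^{(2)}$. All the structural input is Zwegers'; the real work, and the main obstacle, is the patient term-by-term matching in the two steps above: determining every $P(c_{i})$, carrying out the theta-function identities that make the restricted series cancel the denominators, and tracking each sign and root of unity as the $R$-integrals are assembled into $g^{(1,2)}$ — most delicately in the $F^{(2)}$ case with its additional $T$-twists.
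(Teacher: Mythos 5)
Your proposal is correct and follows essentially the same route as the paper: recast the Hecke-type series \eqref{F1series} as the indefinite theta sums \eqref{sgnsgnform} with the stated $A,c_{1},c_{2}$ (so that $\sgn B(c_{1},\nu)-\sgn B(c_{2},\nu)$ cuts out the $r,s\ge 0$ and $r,s<0$ cones), apply \eqref{sgnsgnthetaR}, evaluate the $P(c_{i})$ and the restricted unary thetas (the content of Table \ref{RHStable}), use $\theta_{3,4}(\tau)=\theta_{3}(4\tau)\pm\theta_{2}(4\tau)$ to cancel the denominators, and convert the surviving $R_{s,t}(20\tau)$ (resp.\ $R_{s,t}(10\tau)$) into the Eichler integrals of $g^{(1,2)}$ via \eqref{Rintegral} and the characteristic shifts. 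The computational details you cite (e.g.\ $Ac_{1}=(10,0)\transpose$, $Ac_{2}=(0,-10)\transpose$, $B(c_{i},\nu)/2Q(c_{i})=\mp\nu_{1,2}/2$, the $\frac12\theta_{2}(\tau)$ and $\theta_{2,3}(4\tau)$ evaluations, and the $b=\frac{1}{20}e$ twist for $F^{(2)}$) all check out against the paper's worked example and table.
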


\begin{proof}
The proof consists of applying equation \eqref{sgnsgnthetaR} to the Hecke-type series \eqref{F1series}. Take for instance
\begin{equation*}
q^{-1/40}X(q)=\frac{2q^{1/10}}{\theta_{2}(\tau)}\bigg(\sum_{r,s\geq 0}-\sum_{r,s<0}\bigg) q^{2r^{2}+6rs+2s^{2}+r+s},
\end{equation*}
the fifth component of $F^{(1)}(\tau)$. The exponent of $q$ can be written
\begin{align*}
2r^{2}+6rs+2s^{2}+r+s+\frac{1}{10}&=\frac{1}{2}\begin{pmatrix}r+\frac{1}{10}\\ s+\frac{1}{10}\end{pmatrix}^{\mathrm{T}}\begin{pmatrix}4 & 6\\ 6 & 4\end{pmatrix}\begin{pmatrix}r+\frac{1}{10}\\ s+\frac{1}{10}\end{pmatrix}\\
&=Q\left(\begin{pmatrix}r\\ s\end{pmatrix}+\frac{1}{10}e\right).
\end{align*}
Taking $a=\frac{1}{10}e$, the difference of signs in \eqref{sgnsgnthetaR} is
\begin{align*}
\sgnsgn&=\sgn\begin{pmatrix}10 & 0\end{pmatrix}\begin{pmatrix}r+\frac{1}{10}\\ s+\frac{1}{10}\end{pmatrix}-\sgn\begin{pmatrix}0 & -10\end{pmatrix}\begin{pmatrix}r+\frac{1}{10}\\ s+\frac{1}{10}\end{pmatrix}\notag\\
&=\begin{cases}2 & \text{if }r,s\geq 0,\\
-2 & \text{if }r,s<0,\\
0 & \text{otherwise.}\end{cases}
\end{align*}
The above series for $q^{-1/40}X(q)$ can now be written
\begin{equation*}
q^{-1/40}X(q)=\frac{1}{\theta_{2}(\tau)}\sum_{\nu\in\mathbb{Z}^{2}+\frac{1}{10}e}\sgnsgn q^{Q(\nu)}\rme^{2\pi iB(\nu,0)}.
\end{equation*}
Applying \eqref{sgnsgnthetaR} and consulting the first row of Table \ref{RHStable} results in
\begin{equation*}
q^{-1/40}X(q)=\frac{1}{\theta_{2}(\tau)}\vartheta_{\frac{1}{10}e,0}(\tau)-\frac{1}{2}\left\{R_{\frac{9}{20},0}+R_{\frac{19}{20},0}-R_{\frac{1}{20},0}-R_{\frac{11}{20},0}\right\}(20\tau).
\end{equation*}
Now use the characteristic shifts \eqref{charshifts} and the integral representation \eqref{Rintegral} of $R_{s,t}$ to get
\begin{equation*}
q^{-1/40}X(q)=\frac{1}{\theta_{2}(\tau)}\vartheta_{\frac{1}{10}e,0}(\tau)-i\sqrt{20}\int_{-\taub}^{i\infty}dz\frac{g_{\frac{1}{20},0}(20z)-g_{\frac{9}{20},0}(20z)}{\sqrt{-i(z+\tau)}},
\end{equation*}
as in the proposition.\\
\\
The remaining components of $F^{(1,2)}(\tau)$ are similar. To simplify the first four components of $g^{(1)}(\tau)$, the identities
\begin{align*}
\theta_{2}(4\tau)+\theta_{3}(4\tau)&=\theta_{3}(\tau),\\
-\theta_{2}(4\tau)+\theta_{3}(4\tau)&=\theta_{4}(\tau),
\end{align*}
were used. These can be proved by splitting the series for $\theta_{3,4}(\tau)$ according to $n$ even or odd.
\end{proof}

\section{Modular Transformations}\label{ModularSec}

The modular transformations of the completions $H^{(1,2)}(\tau)$ and shadows $g^{(1,2)}(\tau)$ introduced in Proposition \ref{Hgprop} will now be computed.

\subsection{Modular Transformations of the Completion}

Recall the $S$-transformation of $\thetaab(\tau)$ as given in \eqref{thetaabtrans}:
\begin{equation}\label{thetaabStwo}
\vartheta_{a,b}(-1/\tau)=\frac{i}{\sqrt{-\mathrm{det}A}}(-i\tau)^{m/2}\rme^{2\pi iB(a,b)}\sum_{p\in A^{-1}\mathbb{Z}^{m}/\mathbb{Z}^{m}}\vartheta_{b+p,-a}(\tau).
\end{equation}
For the value of $A$ given in Proposition \ref{Hgprop}, $A^{-1}\mathbb{Z}^{2}$ is the the set of all
\begin{equation*}
A^{-1}\begin{pmatrix}m\\ n\end{pmatrix}=\frac{1}{10}\begin{pmatrix}-2m+3n\\3m-2n\end{pmatrix}\equiv \frac{1}{10}\begin{pmatrix}u\\ v\end{pmatrix}
\end{equation*}
with $m,n,u,v\in\mathbb{Z}$. Inverting,
\begin{equation*}
5\begin{pmatrix}m\\ n\end{pmatrix}=\begin{pmatrix}2u+3v\\3u+2v\end{pmatrix}.
\end{equation*}
There is a solution $(m\; n)$ for every $(u\; v)$ with both $2u+3v$ and $3u+2v$ multiples of $5$. This is automatically true if $u=v$, and all further solutions are translations of these by $(5\; 0)$ and $(0\; 5)$. When constructing $A^{-1}\mathbb{Z}^{2}/\mathbb{Z}^{2}$ for use in \eqref{thetaabStwo}, it makes no difference what set of representatives is used, due to the characteristic shift $\vartheta_{a+\lambda,b}=\thetaab$ for $\lambda\in\mathbb{Z}^{2}$. A convenient choice is
\begin{equation*}
A^{-1}\mathbb{Z}^{2}/\mathbb{Z}^{2}=\left\{\left.\frac{v}{10}e+\frac{k}{2}e_{x}\right\vert 0\leq v\leq 9;\;k=0,1\right\},
\end{equation*}
with $e$ and $e_{x}$ as in Proposition \ref{Hgprop}. Taking $b=0$, the $S$-transformation \eqref{thetaabStwo} becomes
\begin{align}
\vartheta_{\frac{u}{10}e+\frac{j}{2}e_{x},0}(-1/\tau)&=\frac{\tau}{2\sqrt{5}}\sum_{v=0}^{9}\sum_{k=0}^{1}\vartheta_{\frac{v}{10}e+\frac{k}{2}e_{x},-\frac{u}{10}e-\frac{j}{2}e_{x}}(\tau)\notag\\
&=\frac{\tau}{2\sqrt{5}}\sum_{v=0}^{9}\sum_{k=0}^{1}(-1)^{jv+ku}\zeta_{5}^{-uv}\vartheta_{\frac{v}{10}e+\frac{k}{2}e_{x},0}(\tau)\notag\\
&=(-i\tau)\frac{1}{\sqrt{5}}\sum_{v=1}^{4}\sum_{k=0}^{1}(-1)^{jv+ku}\sin\left(\frac{2\pi uv}{5}\right)\vartheta_{\frac{v}{10}e+\frac{k}{2}e_{x},0}(\tau).\label{Sfinal}
\end{align}
The characteristic shifts \eqref{charshifts} were used to restore $b=0$ in the second line, and to combine terms in the third line. Equation \eqref{Sfinal} shows that the functions $\vartheta_{a,0}$, with $a$ in the set 
\begin{equation*}
\left\{\left.\frac{v}{10}e+\frac{k}{2}e_{x}\right\vert 1\leq v\leq 4;\;k=0,1\right\},
\end{equation*}
are closed under $S$. The completion $H^{(1)}$ in Proposition \ref{Hgprop} contains exactly these $\vartheta_{a,0}$. The set cannot be reduced further by characteristic shifts or by the $O_{A}^{+}(\mathbb{Z})$ transformations described in section 2.4 of \cite{ZwegersThesis}.\\
\\
The second completion $H^{(2)}$ contains various $\thetaab(\tau/2)$. Looking back at the definition \eqref{thetadef} of $\thetaab(\tau)$, halving $\tau$ is equivalent to halving $A$ and doubling $b$. In our case $A/2$ is still integer valued. So the $S$-transformation of $\thetaab(\tau/2)$ involves
\begin{equation*}
(A/2)^{-1}\mathbb{Z}^{2}/\mathbb{Z}^{2}=(2A^{-1}\mathbb{Z}^{2})/\mathbb{Z}^{2}=\left\{0,\frac{1}{5}e,\cdots,\frac{4}{5}e\right\}.
\end{equation*}
The characteristic shifts reduce this to just $\left\{\frac{1}{5}e,\frac{2}{5}e\right\}$. The result is similar to \eqref{Sfinal}, except different values of the second characteristic $b$ get mixed. The characteristic shifts cannot always restore $b$ because $\frac{1}{20}e\notin A^{-1}\mathbb{Z}^{2}$.\\
\\
The $T$-transformation of $\thetaab$ given in \eqref{thetaabtrans} is simpler. For the $\vartheta_{a,0}(\tau)$ occurring in $H^{(1)}$, it amounts to just a phase. For the $\thetaab(\tau/2)$ in $H^{(2)}$, the $T$-transformation can change the value of $b$.\\
\\
The completions $H^{(1,2)}$ also contain various standard theta functions. If these are packaged as vectors
\begin{equation*}
\theta_{432}(\tau)=\begin{pmatrix}\theta_{4}\\ \theta_{3}\\ \theta_{2}\end{pmatrix}(\tau),\qquad{\;} \theta_{422}(\tau)=\begin{pmatrix}\sqrt{2}\theta_{4}(2\tau)\\ \theta_{2}(\tau/2)\\ \theta_{2}\left(\frac{\tau+1}{2}\right)\end{pmatrix},
\end{equation*}
then they transform as
\begin{gather*}
\theta_{432}(-1/\tau)=\sqrt{-i\tau}\begin{pmatrix}  &  & 1\\  & 1 & \\ 1 &  & \end{pmatrix}\theta_{432}(\tau),\;\;\;\theta_{432}(\tau+1)=\begin{pmatrix} & 1 & \\ 1 &  & \\ &  & \sqrt{i}\end{pmatrix}\theta_{432}(\tau),\\
\theta_{422}(-1/\tau)=\sqrt{-i\tau}\begin{pmatrix}  & 1 & \\ 1 &  & \\  &  & 1\end{pmatrix}\theta_{422}(\tau),\;\;\;\theta_{422}(\tau+1)=\begin{pmatrix} 1 &  & \\  &  & 1\\  & \sqrt{i} & \end{pmatrix}\theta_{422}(\tau).
\end{gather*}
Applying the transformations discussed here to $H^{(1,2)}$ in Proposition \ref{Hgprop} results in
\begin{equation}\label{Htrans}
H^{(1,2)}(-1/\tau)=\sqrt{-i\tau}M^{(1,2)}H^{(1,2)}(\tau),\;\;\;\;\;H^{(1,2)}(\tau +1)=T^{(1,2)}H^{(1,2)}(\tau),
\end{equation}
with $M^{(1,2)}$ and $T^{(1,2)}$ as in Theorem \ref{maintheorem}.

\subsection{Modular Transformations of the Shadow}

The $S$-transformation of $g_{s,t}(\tau)$ was given in \eqref{gtrans}:
\begin{equation*}
g_{s,t}(-1/\tau)=i\rme^{2\pi ist}(-i\tau)^{3/2}g_{t,-s}(\tau).
\end{equation*}
The first shadow $g^{(1)}$ in Proposition \ref{Hgprop} contains functions $g_{\frac{u}{20},0}(20\tau)$, whose $S$-transformation is
\begin{equation*}
g_{\frac{u}{20},0}(-20/\tau)=-i(-i\tau/20)^{3/2}g_{0,\frac{u}{20}}(\tau/20).
\end{equation*}
Now, $g_{0,\frac{u}{20}}(\tau/20)$ can be expressed again in terms of $g_{\frac{v}{20},0}(20\tau)$:
\begin{equation}\label{gtwentytau}
\begin{alignedat}{4}
g_{0,\frac{u}{20}}(\tau/20)&=\sum_{n\in\mathbb{Z}}n\zeta_{20}^{un}q^{\frac{1}{20}n^{2}/2}\\
&=\sum_{v=0}^{19}\sum_{m\in\mathbb{Z}}(20m+v)\zeta_{20}^{u(20m+v)}q^{\frac{1}{20}(20m+v)^{2}/2}\\
&=20\sum_{v=0}^{19}\zeta_{20}^{uv}g_{\frac{v}{20},0}(20\tau)\\
&=40i\sum_{v=1}^{9}\sin\frac{\pi uv}{10}g_{\frac{v}{20},0}(20\tau),
\end{alignedat}
\end{equation}
so
\begin{equation*}
g_{\frac{u}{20},0}(-20/\tau)=(-i\tau)^{3/2}\frac{1}{\sqrt{5}}\sum_{v=1}^{9}\sin\frac{\pi uv}{10}g_{\frac{v}{20},0}(20\tau).
\end{equation*}
Terms with even (odd) $v$ drop out of the combinations $g_{\frac{u}{20},0}\pm g_{\frac{10-u}{20},0}$ appearing in $g^{(1)}$.\\
\\
The second shadow $g^{(2)}$ contains various $g_{\frac{u}{20},0}(10\tau)$ and $g_{\frac{u}{20},\frac{1}{2}}(10\tau)$, which get mixed under the $S$-transformation. This can be seen by following steps \eqref{gtwentytau} for $g_{0,\frac{u}{20}}(\tau/10)$. For the last two components of $g^{(2)}$ various $\sin\frac{n\pi}{20}$ arise, and it helps to use the identities
\begin{equation*}
\sin\frac{9\pi}{20}-\sin\frac{\pi}{20}=\sqrt{2}\sin\frac{\pi}{5},\;\;\;\;\;\;\;\sin\frac{3\pi}{20}+\sin\frac{7\pi}{20}=\sqrt{2}\sin\frac{2\pi}{5}.
\end{equation*}
\\
The $T$-transformation \eqref{gtrans} again amounts to just a phase for the $g_{\frac{u}{20},0}(20\tau)$ occurring in $g^{(1)}$. It mixes the $g_{\frac{u}{20},0}(10\tau)$ and $g_{\frac{u}{20},\frac{1}{2}}(10\tau)$ in $g^{(2)}$.\\
\\
All of this results in
\begin{equation*}
g^{(1,2)}(-1/\tau)=-(-i\tau)^{3/2}M^{(1,2)}g^{(1,2)}(\tau),\;\;\;\;g^{(1,2)}(\tau+1)=\big(T^{(1,2)}\big)^{-1}g^{(1,2)}(\tau),
\end{equation*}
with $M^{(1,2)}$ and $T^{(1,2)}$ as in Theorem \ref{maintheorem}. The integrals
\begin{equation*}
G^{(1,2)}(\tau)=-i\int_{-\taub}^{i\infty}dz\frac{g^{(1,2)}(z)}{\sqrt{-i(z+\tau)}}
\end{equation*}
in Proposition \ref{Hgprop} then transform as
\begin{equation}\label{Gtrans}
\begin{alignedat}{2}
G^{(1,2)}(-1/\tau)&=\sqrt{-i\tau}M^{(1,2)}\bigg(G^{(1,2)}(\tau)+i\int_{0}^{i\infty}dz\frac{g^{(1,2)}(z)}{\sqrt{-i(z+\tau)}}\bigg),\\
G^{(1,2)}(\tau+1)&=T^{(1,2)}G^{(1,2)}(\tau).
\end{alignedat}
\end{equation}
The correction term in \eqref{Gtrans} transforms as
\begin{equation}\label{correctiontrans}
i\int_{0}^{i\infty}dz\frac{g^{(1,2)}(z)}{\sqrt{-i(z+\tau)}}\xrightarrow{S}{}-\sqrt{-i\tau}M^{(1,2)}\left(i\int_{0}^{i\infty}dz\frac{g^{(1,2)}(z)}{\sqrt{-i(z+\tau)}}\right),
\end{equation}
as it must for $G^{(1,2)}(\tau)$ to obey $S^{2}=\mathds{1}$, a defining relation of \sltwoz. This can be seen by replacing the lower terminal of integration on the left side by $i\varepsilon$, and taking the limit $\varepsilon\rightarrow 0$ after the $S$-transformation.

\subsection{Mordell Integrals}

The final piece of Theorem \ref{maintheorem} is the correction terms $J^{(1,2)}$. The following result relates them to the corrections in the $S$-transformation of $G^{(1,2)}$ \eqref{Gtrans}.
\begin{prop}\label{Mordellprop}
The correction terms $J^{(1,2)}$ in Theorem \ref{maintheorem} can be expressed as
\begin{equation*}
\frac{1}{\sqrt{-i\tau}}J^{(1,2)}(\pi i/\tau)=i\sqrt{-i\tau}\int_{0}^{i\infty}dz\frac{M^{(1,2)}g^{(1,2)}(z)}{\sqrt{-i(z+\tau)}},
\end{equation*}
with $M^{(1,2)}$ and $g^{(1,2)}$ as in Theorem \ref{maintheorem} and Proposition \ref{Hgprop}.
\end{prop}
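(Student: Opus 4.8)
The plan is to evaluate the right-hand side of Proposition \ref{Mordellprop} directly: insert the explicit series for $g^{(1,2)}$ from Proposition \ref{Hgprop}, represent $(-i(z+\tau))^{-1/2}$ as a Gaussian integral, swap the order of integration so that the $z$-integral becomes elementary, resum the arising Mittag--Leffler series, and recognise the result as the $K_{j},L_{j}$ of Theorem \ref{maintheorem}.

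Concretely, for $F^{(1)}$ each component of $g^{(1)}(z)$ is $\sqrt{20}$ times a linear combination of $g_{u/20,0}(20z)=\sum_{\sigma\in\mathbb Z+u/20}\sigma\,\rme^{20\pi i\sigma^{2}z}$. Using $(-i(z+\tau))^{-1/2}=\int_{\mathbb R}\rme^{\pi i(z+\tau)x^{2}}\,dx$ (valid since $\mathrm{Im}(z+\tau)>0$), Fubini's theorem, and $\int_{0}^{i\infty}\rme^{\pi iz(x^{2}+20\sigma^{2})}\,dz=\frac{i}{\pi(x^{2}+20\sigma^{2})}$, one gets
\begin{equation*}
\int_{0}^{i\infty}\frac{g_{u/20,0}(20z)}{\sqrt{-i(z+\tau)}}\,dz=\frac{i}{\pi}\int_{\mathbb R}\rme^{\pi i\tau x^{2}}\sum_{\sigma\in\mathbb Z+u/20}\frac{\sigma}{x^{2}+20\sigma^{2}}\,dx,
\end{equation*}
and the classical expansion $\sum_{n\in\mathbb Z}(n+w)^{-1}=\pi\cot\pi w$ (as a symmetric limit) yields
\begin{equation*}
\sum_{\sigma\in\mathbb Z+u/20}\frac{\sigma}{x^{2}+20\sigma^{2}}=\frac{\pi}{20}\cdot\frac{\sin\frac{\pi u}{10}}{\cosh\frac{\pi x}{\sqrt5}-\cos\frac{\pi u}{10}},
\end{equation*}
so that after $x=\sqrt5\,v$ one obtains $\int_{0}^{i\infty}(-i(z+\tau))^{-1/2}g_{u/20,0}(20z)\,dz=\tfrac{i\sqrt5}{10}\sin\tfrac{\pi u}{10}\int_{0}^{\infty}\rme^{5\pi i\tau v^{2}}(\cosh\pi v-\cos\tfrac{\pi u}{10})^{-1}\,dv$. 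The same steps work for $F^{(2)}$ with scale $10z$; for the components of $g^{(2)}$ with second characteristic $\tfrac12$ one instead uses $\sum_{n}(-1)^{n}(n+w)^{-1}=\pi/\sin\pi w$, which is the source of the half-integer indices $j\in\{\tfrac12,\tfrac32,\tfrac72,\tfrac92\}$ in $J^{(2)}$.

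Next I would apply $M^{(1,2)}$ and collapse the resulting combination of functions $(\cosh\pi v-\cos\tfrac{\pi u}{10})^{-1}$ into a single quotient $\cosh\pi jv/\cosh5\pi v$ or $\sinh\pi jv/\sinh5\pi v$. The tools are the factorisations $\cosh5\theta=16\cosh\theta\,(\cosh^{2}\theta-\cos^{2}\tfrac{\pi}{10})(\cosh^{2}\theta-\cos^{2}\tfrac{3\pi}{10})$ and $\sinh5\theta=16\sinh\theta\,(\cosh^{2}\theta-\cos^{2}\tfrac{\pi}{5})(\cosh^{2}\theta-\cos^{2}\tfrac{2\pi}{5})$, the value $\sin\tfrac{\pi}{5}\sin\tfrac{2\pi}{5}=\tfrac{\sqrt5}{4}$, and sum-to-product identities (for $F^{(2)}$ also $\sin\tfrac{9\pi}{20}-\sin\tfrac{\pi}{20}=\sqrt2\sin\tfrac{\pi}{5}$ and $\sin\tfrac{3\pi}{20}+\sin\tfrac{7\pi}{20}=\sqrt2\sin\tfrac{2\pi}{5}$, as already noted in the excerpt). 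For example the first component of $M^{(1)}g^{(1)}$ involves $u\in\{1,3,7,9\}$ with weights $\pm\tfrac{2}{\sqrt5}\sin\tfrac{\pi}{5},\ \pm\tfrac{2}{\sqrt5}\sin\tfrac{2\pi}{5}$, and the identities above collapse it to a multiple of $\cosh\pi v/\cosh5\pi v$, giving $\int_{0}^{i\infty}(-i(z+\tau))^{-1/2}[M^{(1)}g^{(1)}]_{1}(z)\,dz=2\sqrt5\,i\int_{0}^{\infty}\rme^{5\pi i\tau v^{2}}\cosh\pi v/\cosh5\pi v\,dv$. On the other side, substituting $x=-i\tau y$ in $K_{j}(\pi i/\tau)$ and $L_{j}(\pi i/\tau)$ — legitimate because $\rme^{5\pi i\tau y^{2}}\cosh\pi jy/\cosh5\pi y$ and its $\sinh$ analogue decay in the sector between the rotated ray and $\mathbb R_{\geq0}$ and have no poles there — gives $K_{j}(\pi i/\tau)=-i\tau\int_{0}^{\infty}\rme^{5\pi i\tau y^{2}}\cosh\pi jy/\cosh5\pi y\,dy$, and similarly for $L_{j}$. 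Then the prefactor $i\sqrt{-i\tau}$ together with $(-i\tau)/\sqrt{-i\tau}=\sqrt{-i\tau}$ converts the computed integral into $\tfrac1{\sqrt{-i\tau}}J^{(1,2)}(\pi i/\tau)$ component by component.

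The main obstacle is the bookkeeping in this last collapse: there are twelve components, the $M^{(1,2)}$-combinations mix several residues $u$, and for $F^{(2)}$ the half-integer characteristics and the extra $\sin\tfrac{n\pi}{20}$ identities must be tracked carefully. A secondary point is the conditional convergence of $\sum_{\sigma}\sigma/(x^{2}+20\sigma^{2})$ and the justification of Fubini's theorem; these are handled by performing the $z$-integral before resumming and by the bound $|\rme^{\pi i(z+\tau)x^{2}}|=\rme^{-\pi\,\mathrm{Im}(z+\tau)\,x^{2}}$, using that $g^{(1,2)}(z)$ decays exponentially as $z\to i\infty$ and vanishes as $z\to0$. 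Finally, the transformation $J^{(1,2)}(-\pi i\tau)=-(-i\tau)^{-3/2}M^{(1,2)}J^{(1,2)}(\pi i/\tau)$ asserted in Theorem \ref{maintheorem} follows either by the same contour rotation applied to the $\cosh/\cosh$ and $\sinh/\sinh$ representations, or by combining Proposition \ref{Mordellprop} with \eqref{Gtrans} and the relation $S^{2}=\mathds1$ (equivalently \eqref{correctiontrans}).
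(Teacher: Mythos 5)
Your proposal is correct, and at bottom it is the paper's computation run in the reverse direction, so the comparison is mainly one of organization. The paper starts from the Mordell integrals: it restricts to $\tau=it$, $t>0$ (holomorphy of both sides then gives the general case), expands the hyperbolic kernel of each $K_{j}$, $L_{j}$ by the partial-fraction identity \eqref{lemmanineteen}, turns each resulting term into a period integral by \eqref{lemmaeighteen}, and then reassembles the unary series through characteristic identities of the type \eqref{gtwentytau}, so the combination $M^{(1,2)}g^{(1,2)}$ emerges without ever recombining Lorentzian kernels. You start from the theta side: your Gaussian representation of $(-i(z+\tau))^{-1/2}$ together with the elementary $z$-integral is exactly \eqref{lemmaeighteen} in disguise, and your cotangent/cosecant resummation is the inverse of \eqref{lemmanineteen}; the price is the extra collapse step via the factorizations of $\cosh 5\theta$ and $\sinh 5\theta$ and $\sin\frac{\pi}{5}\sin\frac{2\pi}{5}=\frac{\sqrt{5}}{4}$, which the paper never needs. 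I checked your first component: the $u\in\{1,3,7,9\}$ combination indeed collapses to $5\cosh\pi v/\cosh 5\pi v$ and, with your prefactor $2\sqrt{5}\,i$ and the rotation $K_{1}(\pi i/\tau)=-i\tau\int_{0}^{\infty}\rme^{5\pi i\tau y^{2}}\cosh(\pi y)/\cosh(5\pi y)\,dy$, reproduces the component $-\sqrt{20}K_{1}$ of $J^{(1)}$ with the correct constants. That rotation is the other genuine difference: you keep $\tau\in\mathbb{H}$ general and pay with a sector estimate (the poles at $x=\tau(k+\frac{1}{2})/5$ avoid the sector and the Gaussian decays there), whereas the paper works on the imaginary axis and continues analytically; both are legitimate. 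The Fubini/conditional-convergence issue you flag is real but routine (truncate the $z$-integral at $i\varepsilon$, interchange absolutely, and let $\varepsilon\rightarrow 0$, using the symmetric pairing of $\sigma$ and $-\sigma$ or Abel summation in the limit), and an equivalent interchange is implicit in the paper's term-by-term use of \eqref{lemmanineteen} inside \eqref{Kone}. What your route buys is independence from Zwegers' two lemmas and explicit closed forms for the integrated shadow components; what the paper's route buys is much lighter trigonometric bookkeeping, since it expands ratios of hyperbolic functions rather than reconstituting them.
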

\begin{proof}
Our proof follows Zwegers \cite{Zwegers3rd}. Both sides of the proposition are holomorphic functions of $\tau\in\mathbb{H}$, so it suffices to consider the positive imaginary line $\tau=it$ with $t\in \mathbb{R}_{>0}$. The first component of the left side for $J^{(1)}$ is
\begin{equation}\label{Kone}
-\sqrt{\frac{20}{t}}K_{1}(\pi/t)=-\sqrt{\frac{20}{t}}\int_{0}^{\infty}dx\rme^{-5\pi x^{2}/t}\frac{\cosh(\pi x/t)}{\cosh(5\pi x/t)}.
\end{equation}
We now borrow two lemmas. Lemma 1.19 of \cite{ZwegersThesis} states that for $b\in(-\frac{1}{2},\frac{1}{2})$ and $z\in\mathbb{C}$, $z\notin(\mathbb{Z}+\frac{1}{2})i$,
\begin{equation}\label{lemmanineteen}
\frac{\rme^{2\pi bz}}{\cosh\pi z}=-\frac{1}{\pi}\sum_{r\in\mathbb{Z}+\frac{1}{2}}\frac{\rme^{2\pi ir(b+1/2)}}{z-ir}.
\end{equation}
This follows from a partial fraction expansion. Lemma 1.18 of \cite{ZwegersThesis} states that for $r\in\mathbb{R}_{\neq 0}$ and $\tau\in\mathbb{H}$,
\begin{equation}\label{lemmaeighteen}
\int_{-\infty}^{\infty}dw\frac{\rme^{\pi i\tau w^{2}}}{w+ir}=-\pi r\int_{0}^{i\infty}dz\frac{\rme^{\pi ir^{2}z}}{\sqrt{-i(z+\tau)}}.
\end{equation}
Applying \eqref{lemmanineteen} to \eqref{Kone},
\begin{align*}
-\sqrt{\frac{20}{t}}K_{1}(\pi/t)&=\sqrt{\frac{20}{t}}\frac{1}{2\pi}\int_{0}^{\infty}dx\rme^{-5\pi x^{2}/t}\sum_{r\in\mathbb{Z}+\frac{1}{2}}\frac{\rme^{2\pi ir\frac{3}{5}}+\rme^{2\pi ir\frac{2}{5}}}{\frac{5x}{t}-ir}\\
&=\sqrt{\frac{t}{5}}\frac{1}{\pi}\sum_{r\in\mathbb{Z}+\frac{1}{2}}\zeta_{5}^{2r}\int_{-\infty}^{\infty}dx'\frac{\rme^{-\pi tx'^{2}/5}}{x'-ir}.
\end{align*}
Now applying \eqref{lemmaeighteen},
\begin{align*}
-\sqrt{\frac{20}{t}}K_{1}(\pi/t)&=\sqrt{\frac{t}{5}}\sum_{r\in\mathbb{Z}+\frac{1}{2}}\zeta_{5}^{2r}r\int_{0}^{i\infty}dz\frac{\rme^{\pi ir^{2}z}}{\sqrt{-i(z+it/5)}}\\
&=\frac{\sqrt{t}}{5}\int_{0}^{i\infty}dz'\frac{g_{\frac{1}{2},\frac{2}{5}}(z'/5)}{\sqrt{-i(z'+it)}}.
\end{align*}
Finally, use
\begin{equation*}
g_{\frac{1}{2},\frac{u}{5}}(\tau/10)=20i\sum_{v=0}^{4}\sin\left(\frac{\pi u(2v+1)}{5}\right)g_{\frac{2v+1}{20},0}(10\tau)
\end{equation*}
(compare with \eqref{gtwentytau}) to get
\begin{equation*}
-\sqrt{\frac{20}{t}}K_{1}(\pi/t)=4i\sqrt{t}\int_{0}^{i\infty}dz\frac{\sin\frac{2\pi}{5}[g_{\frac{1}{20},0}-g_{\frac{9}{20},0}](20z)-\sin\frac{\pi}{5}[g_{\frac{3}{20},0}-g_{\frac{7}{20},0}](20z)}{\sqrt{-i(z+it)}},
\end{equation*}
which is the first component of the proposition for $J^{(1)}$. The remaining components of $J^{(1,2)}$ are similar.
\end{proof}

\subsection{Proof of Theorem \texorpdfstring{\ref{maintheorem}}{1}}

\begin{proof}[Proof of Theorem \ref{maintheorem}]
Applying the transformations of $H^{(1,2)}$ \eqref{Htrans} and $G^{(1,2)}$ \eqref{Gtrans} to Proposition \ref{Hgprop}, rewriting the correction terms with Proposition \ref{Mordellprop}, and applying the transformation of the correction terms \eqref{correctiontrans}, Theorem \ref{maintheorem} readily follows.
\end{proof}

\section{Mock Modular Forms for Congruence Subgroups}\label{ConjectureSec}

In this section the behavior of tenth order mock theta functions under two congruence subgroups of $SL(2,\mathbb{Z})$ is considered. The first, $\Gamma_{0}(2)$, is generated by $T$ and
\begin{equation*}
V_{1}=T^{-1}ST^{-2}S.
\end{equation*}
The second congruence subgroup, $\Gamma_{0}(4)$, is itself a subgroup of $\Gamma_{0}(2)$, and is generated by $T$ and
\begin{equation*}
V_{4}=ST^{-4}S=(TV_{1})^{2}.
\end{equation*}
As discussed in the introduction, Cheng, Duncan, and Harvey \cite{CDH} recently conjectured that the two vectors in \eqref{McKayThompson} are mock modular forms of weight $1/2$ under $\Gamma_{0}(2)$ and $\Gamma_{0}(4)$ respectively. This is confirmed by the following corollary to Theorem \ref{maintheorem}.

\begin{cor}
The vectors
\begin{gather*}
\begin{pmatrix}q^{1/10}\phi(q^{1/2})\\ q^{-1/10}\psi(q^{1/2})\\ q^{1/10}\phi(-q^{1/2})\\ q^{-1/10}\psi(-q^{1/2})\end{pmatrix}, \begin{pmatrix}q^{-1/40}X(q)\\ q^{-9/40}\chi(q)\end{pmatrix},\begin{pmatrix}q^{1/5}\phi(q)\\ q^{-1/5}\psi(q)\end{pmatrix},\begin{pmatrix}q^{1/5}\phi(-q)\\ q^{-1/5}\psi(-q)\\ q^{-1/20}X(q^{2})\\ q^{-9/20}\chi(q^{2})\end{pmatrix},\\
\begin{pmatrix}q^{-1/80}X(q^{1/2})\\ q^{-9/80}\chi(q^{1/2})\\ q^{-1/80}X(-q^{1/2})\\ q^{-9/80}\chi(-q^{1/2})\end{pmatrix},\begin{pmatrix}\sqrt{2}q^{2/5}\phi(q^{2})\\ \sqrt{2}q^{-2/5}\psi(q^{2})\\ q^{-1/40}X(-q)\\ q^{-9/40}\chi(-q)\end{pmatrix},
\end{gather*}
are all mock modular forms of weight $1/2$ under the congruence subgroup $\Gamma_{0}(2)$ of $SL(2,\mathbb{Z})$. Under $\Gamma_{0}(4)$, two of them split into smaller vectors:
\begin{gather*}
\begin{pmatrix}q^{1/5}\phi(-q)\\ q^{-1/5}\psi(-q)\\ q^{-1/20}X(q^{2})\\ q^{-9/20}\chi(q^{2})\end{pmatrix}\longrightarrow\begin{pmatrix}q^{1/5}\phi(-q)\\ q^{-1/5}\psi(-q)\end{pmatrix},\begin{pmatrix}q^{-1/20}X(q^{2})\\ q^{-9/20}\chi(q^{2})\end{pmatrix},\\
\begin{pmatrix}\sqrt{2}q^{2/5}\phi(q^{2})\\ \sqrt{2}q^{-2/5}\psi(q^{2})\\ q^{-1/40}X(-q)\\ q^{-9/40}\chi(-q)\end{pmatrix}\longrightarrow\begin{pmatrix}q^{2/5}\phi(q^{2})\\ q^{-2/5}\psi(q^{2})\end{pmatrix},\begin{pmatrix}q^{-1/40}X(-q)\\ q^{-9/40}\chi(-q)\end{pmatrix}.
\end{gather*}
\end{cor}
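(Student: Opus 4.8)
The plan is to realise every vector in the statement, after rescaling its individual entries, as a coordinate sub-vector of one of the four ``parent'' vectors $F^{(1)}(\tau)$, $F^{(2)}(\tau)$, $F^{(1)}(2\tau)$, $F^{(2)}(2\tau)$ of Theorem \ref{maintheorem}, and then to cut those parents down from $SL(2,\mathbb{Z})$ to $\Gamma_{0}(2)$ (and $\Gamma_{0}(4)$). First I would match up the entries: expanding the definitions of $\phi,\psi,X,\chi$ and comparing powers of $q$ (using $q\mapsto q^{2}$ under $\tau\mapsto 2\tau$) shows that the first vector is the top four entries of $F^{(1)}(\tau)$, the second the bottom two of $F^{(1)}(\tau)$, the third is $\tfrac{1}{\sqrt{2}}$ times the top two of $F^{(2)}(\tau)$ and equally the top two of $F^{(1)}(2\tau)$, the fourth is the bottom four of $F^{(1)}(2\tau)$, the fifth the bottom four of $F^{(2)}(\tau)$, and the sixth is entries $1,2,5,6$ of $F^{(2)}(2\tau)$; moreover entries $3,4$ of $F^{(2)}(2\tau)$ again reproduce the second vector. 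These overlaps provide independent consistency checks.

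Next I would record a level-raising step. If $\gamma=\left(\begin{smallmatrix}a&b\\c&d\end{smallmatrix}\right)\in\Gamma_{0}(N)$ then $N\gamma\tau=\gamma'(N\tau)$ with $\gamma'=\left(\begin{smallmatrix}a&Nb\\c/N&d\end{smallmatrix}\right)\in SL(2,\mathbb{Z})$. Applying Theorem \ref{maintheorem} at the point $N\tau$ with the matrix $\gamma'$, and using that in \eqref{Stheorem} the completion $H^{(1,2)}$ of \eqref{Htrans} and the correction term $\tfrac{1}{\sqrt{-i\tau}}J^{(1,2)}(\pi i/\tau)$ transform under $S$ with the same matrix $M^{(1,2)}$, one finds that $F^{(1,2)}(N\tau)$ is a mock modular form of weight $1/2$ for $\Gamma_{0}(N)$, with completion $H^{(1,2)}(N\tau)$, shadow a multiple of $g^{(1,2)}(N\tau)$, and multiplier $\gamma\mapsto\rho^{(1,2)}(\gamma')$, where $\rho^{(1,2)}$ is the representation of $SL(2,\mathbb{Z})$ generated by $\rho^{(1,2)}(S)=M^{(1,2)}$ and $\rho^{(1,2)}(T)=T^{(1,2)}$. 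In particular all four parents are mock modular forms of weight $1/2$ for $\Gamma_{0}(2)$.

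It then remains only to check that the relevant coordinate subspaces are invariant under the $\Gamma_{0}(2)$- (respectively $\Gamma_{0}(4)$-) representation; block-diagonality of $\rho^{(1,2)}|_{\Gamma_{0}(N)}$ for a given splitting is exactly what is needed for each corresponding sub-vector of $F^{(1,2)}(N\,\cdot\,)$ to be a mock modular form in its own right, its completion and shadow being the matching sub-vectors. Since $\Gamma_{0}(2)=\langle T,V_{1}\rangle$ with $V_{1}=T^{-1}ST^{-2}S=\left(\begin{smallmatrix}1&1\\-2&-1\end{smallmatrix}\right)$, it suffices to examine two matrices per parent: for $F^{(1,2)}(\tau)$ these are $T^{(1,2)}$ and $(T^{(1,2)})^{-1}M^{(1,2)}(T^{(1,2)})^{-2}M^{(1,2)}$, and for $F^{(1,2)}(2\tau)$ one has $\gamma'=T^{2}$ for $\gamma=T$ and $\gamma'=\left(\begin{smallmatrix}1&2\\-1&-1\end{smallmatrix}\right)$ for $\gamma=V_{1}$, the latter being a word in $S,T$ that translates into the same word in $M^{(1,2)},T^{(1,2)}$. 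Reading the block structure off Theorem \ref{maintheorem} — for instance $M^{(1)}$ exchanges entries $\{1,2\}\leftrightarrow\{5,6\}$ and fixes $\{3,4\}$, while $T^{(1)}$ exchanges $\{1,2\}\leftrightarrow\{3,4\}$ and fixes $\{5,6\}$, and similarly for $M^{(2)},T^{(2)}$ — a short computation shows these generators are block-diagonal for the splittings $\{1,2,3,4\}\oplus\{5,6\}$ of $F^{(1)}(\tau)$, $\{1,2\}\oplus\{3,4,5,6\}$ of both $F^{(2)}(\tau)$ and $F^{(1)}(2\tau)$, and $\{3,4\}\oplus\{1,2,5,6\}$ of $F^{(2)}(2\tau)$, which are precisely the six listed vectors.

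For $\Gamma_{0}(4)=\langle T,V_{4}\rangle$ with $V_{4}=ST^{-4}S$, I would use $\Gamma_{0}(4)\subset\Gamma_{0}(2)$, so the four parents remain mock modular; the only new input is that for $\gamma\in\Gamma_{0}(4)$ acting on $F^{(1,2)}(2\tau)$ the matrix $\gamma'$ lies in $\Gamma_{0}(2)$, and for $\gamma=V_{4}$ one computes $\gamma'=ST^{-2}S$, contributing the extra generator $M^{(1,2)}(T^{(1,2)})^{-2}M^{(1,2)}$. Checking block structure once more shows this refines entries $\{3,4,5,6\}$ of $F^{(1)}(2\tau)$ into $\{3,4\}\oplus\{5,6\}$ and entries $\{1,2,5,6\}$ of $F^{(2)}(2\tau)$ into $\{1,2\}\oplus\{5,6\}$, which is the stated splitting of the fourth and sixth vectors. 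The argument is essentially bookkeeping; the one point needing care is the level-raising step, where the weight-$1/2$ automorphy factor and the Mordell-integral corrections must reassemble into the correct $\Gamma_{0}(N)$-multiplier — but this is automatic, since $V_{1},V_{4}\in SL(2,\mathbb{Z})$ and the required compositions of $S$- and $T$-transformations are exactly those already carried out in the proof of Theorem \ref{maintheorem}. Finally one should note that the omitted series $H_{4C}^{(5,2)},H_{4C}^{(5,4)}$ in \eqref{McKayThompson} vanish, consistent with the $\Gamma_{0}(4)$ refinement peeling off only the two-component $X,\chi$ blocks and not the $\phi,\psi$ ones.
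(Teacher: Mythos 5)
Your proposal is correct and is essentially the paper's own argument: identify each listed vector as a coordinate block of $F^{(1,2)}(\tau)$ or $F^{(1,2)}(2\tau)$, note that $T$ and $V_{1}$ (resp.\ $V_{4}$) act on $F^{(1,2)}(N\tau)$ through words in $M^{(1,2)},T^{(1,2)}$ obtained from $N\gamma\tau=\gamma'(N\tau)$, and read off block-diagonality of those words. The only difference is one of presentation: the paper computes a single case in detail, namely $(T^{(1)})^{-2}M^{(1)}(T^{(1)})^{-1}M^{(1)}$ acting on $F^{(1)}(2\tau)$ under $\Gamma_{0}(2)$ (with the explicit correction integral), and declares the remaining reductions analogous, while you carry out the same bookkeeping uniformly for all four parent vectors and both congruence subgroups.
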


\begin{proof}
Take for instance the third and fourth vectors of the corollary. These are the first two and last four components of $F^{(1)}(2\tau)$ respectively, so it is enough to show $F^{(1)}(2\tau)$ reduces to them under $\Gamma_{0}(2)$. The $T$-transformation of $F^{(1)}(2\tau)$ is
\begin{equation*}
F^{(1)}\big(2(\tau+1)\big)=\big(T^{(1)}\big)^{2}F^{(1)}(2\tau),
\end{equation*}
which is diagonal. As for the other generator $V_{1}$, it induces the following action on $2\tau$:
\begin{equation*}
2V_{1}\tau=2\frac{\tau+1}{-2\tau-1}=\left[T^{-2}ST^{-1}S\tau\right]_{\tau\rightarrow 2\tau}.
\end{equation*}
Now note that
\begin{equation*}
\big(T^{(1)}\big)^{-2}M^{(1)}\big(T^{(1)}\big)^{-1}M^{(1)}=\begin{pmatrix}X & & \\ & & Y\\ & Z & \end{pmatrix},
\end{equation*}
where
\begin{gather*}
\renewcommand{\arraystretch}{1.3}
X=\frac{4}{5}\begin{pmatrix}
\zeta_{40}\sin^{2}\frac{\pi}{5}+\zeta_{40}^{-7}\sin^{2}\frac{2\pi}{5} & 2\zeta_{40}^{-13}\sin^{2}\frac{\pi}{5}\sin\frac{2\pi}{5}\\
2\zeta_{40}^{3}\sin^{2}\frac{\pi}{5}\sin\frac{2\pi}{5} & \zeta_{40}^{9}\sin^{2}\frac{\pi}{5}+\zeta_{40}^{17}\sin^{2}\frac{2\pi}{5}
\end{pmatrix},\\
\renewcommand{\arraystretch}{1.3}
Y=\frac{4}{5}\begin{pmatrix}
\zeta_{10}^{-1}\sin^{2}\frac{\pi}{5}+\zeta_{10}^{-3}\sin^{2}\frac{2\pi}{5} & 2\zeta_{20}\sin^{2}\frac{\pi}{5}\sin\frac{2\pi}{5}\\
2\zeta_{20}^{-1}\sin^{2}\frac{\pi}{5}\sin\frac{2\pi}{5} & -\zeta_{10}\sin^{2}\frac{\pi}{5}-\zeta_{10}^{3}\sin^{2}\frac{2\pi}{5}\end{pmatrix},\\
\renewcommand{\arraystretch}{1.3}
Z=\frac{4}{5}\begin{pmatrix}\zeta_{20}^{3}\sin^{2}\frac{\pi}{5}+\zeta_{20}^{-1}\sin^{2}\frac{2\pi}{5} & 2\zeta_{5}^{-1}\sin^{2}\frac{\pi}{5}\sin\frac{2\pi}{5}\\
-2\zeta_{5}\sin^{2}\frac{\pi}{5}\sin\frac{2\pi}{5} & \zeta_{20}^{-3}\sin^{2}\frac{\pi}{5}+\zeta_{20}\sin^{2}\frac{2\pi}{5}
\end{pmatrix}.
\renewcommand{\arraystretch}{1}
\end{gather*}
Then the action of $V_{1}$ on $F^{(1)}(2\tau)$ is
\begin{equation*}
F^{(1)}(2V_{1}\tau)=\sqrt{2\tau+1}\begin{pmatrix}X & & \\ & & Y\\ & Z & \end{pmatrix}\left(F^{(1)}(2\tau)+i\int_{1}^{i\infty}dz\frac{g^{(1)}(z)}{\sqrt{-i(z+2\tau)}}\right).
\end{equation*}
Hence, under $\Gamma_{0}(2)$, $F^{(1)}(2\tau)$ reduces to the third and fourth vectors of the corollary, and these vectors individually are mock modular forms for $\Gamma_{0}(2)$. The remaining vectors of the corollary are the reductions of $F^{(1,2)}(\tau)$ and $F^{(1,2)}(2\tau)$ under $\Gamma_{0}(2)$ and $\Gamma_{0}(4)$.

\end{proof}

\subsection*{Acknowledgements}
I wish to thank my advisor Jeff Harvey for suggesting this problem, and for helpful discussions.

\begin{landscape}
\renewcommand{\arraystretch}{1.4}
\begin{table}
\caption{Elements of equation \eqref{sgnsgnthetaR} relevant for Proposition \ref{Hgprop}.}
\begin{center}
\begin{equation*}
\begin{array}{l||c|c|c|c|c|c|c|c}
\multicolumn{1}{c||}{a} & P(c_{1}) & \frac{B(c_{1},P(c_{1}))}{2Q(c_{1})} & \theta^{\perp c_{1}}_{P(c_{1}),0}(\tau) & \theta^{\perp c_{1}}_{P(c_{1}),\frac{1}{20}e}(\tau/2) & P(c_{2}) & \frac{B(c_{2},P(c_{2}))}{2Q(c_{2})} & \theta^{\perp c_{2}}_{P(c_{2}),0}(\tau) & \theta^{\perp c_{2}}_{P(c_{2}),\frac{1}{20}e}(\tau/2)\\
[0.4ex] \hline

\frac{1}{10}e & -\frac{9}{10}e,-\frac{19}{10}e & \frac{9}{20},\frac{19}{20} & \frac{1}{2}\theta_{2}(\tau) & \frac{1}{2}\zeta_{16}^{-3,1}\theta_{2}\left(\frac{\tau+1}{2}\right) & \frac{1}{10}e,\frac{11}{10}e & \frac{1}{20},\frac{11}{20} & \frac{1}{2}\theta_{2}(\tau) & \frac{1}{2}\zeta_{16}^{1,-3}\theta_{2}\left(\frac{\tau+1}{2}\right)\\

\frac{1}{10}e+\frac{1}{2}e_{x} & \uparrow +\frac{1}{2}e_{x} & \uparrow -\frac{5}{20} & \theta_{2,3}(4\tau) & 0,\theta_{4}(2\tau) & \uparrow +\frac{1}{2}e_{x} & \uparrow  & \frac{1}{2}\theta_{2}(\tau) & \frac{1}{2}\zeta_{16}^{-3,1}\theta_{2}\left(\frac{\tau+1}{2}\right)\\

\frac{2}{10}e & -\frac{8}{10}e,-\frac{18}{10}e & \frac{8}{20},\frac{18}{20} & \theta_{3,2}(4\tau) & \theta_{4}(2\tau),0 & \frac{2}{10}e,\frac{12}{10}e & \frac{2}{20},\frac{12}{20} & \theta_{2,3}(4\tau) & 0,\theta_{4}(2\tau)\\

\frac{2}{10}e+\frac{1}{2}e_{x} & \uparrow +\frac{1}{2}e_{x} & \uparrow -\frac{5}{20} & \frac{1}{2}\theta_{2}(\tau) & \frac{1}{2}\zeta_{16}^{-3,1}\theta_{2}\left(\frac{\tau+1}{2}\right) & \uparrow +\frac{1}{2}e_{x} & \uparrow  & \theta_{3,2}(4\tau) & \theta_{4}(2\tau),0\\

\frac{3}{10}e & -\frac{7}{10}e,-\frac{17}{10}e & \frac{7}{20},\frac{17}{20} & \frac{1}{2}\theta_{2}(\tau) & \frac{1}{2}\zeta_{16}^{1,-3}\theta_{2}\left(\frac{\tau+1}{2}\right) & \frac{3}{10}e,\frac{13}{10}e & \frac{3}{20},\frac{13}{20} & \frac{1}{2}\theta_{2}(\tau) & \frac{1}{2}\zeta_{16}^{-3,1}\theta_{2}\left(\frac{\tau+1}{2}\right)\\

\frac{3}{10}e+\frac{1}{2}e_{x} & \uparrow +\frac{1}{2}e_{x} & \uparrow -\frac{5}{20} & \theta_{3,2}(4\tau) & \theta_{4}(2\tau),0 & \uparrow +\frac{1}{2}e_{x} & \uparrow  & \frac{1}{2}\theta_{2}(\tau) & \frac{1}{2}\zeta_{16}^{1,-3}\theta_{2}\left(\frac{\tau+1}{2}\right)\\

\frac{4}{10}e & -\frac{6}{10}e,-\frac{16}{10}e & \frac{6}{20},\frac{16}{20} & \theta_{2,3}(4\tau) & 0,\theta_{4}(2\tau) & \frac{4}{10}e,\frac{14}{10}e & \frac{4}{20},\frac{14}{20} & \theta_{3,2}(4\tau) & \theta_{4}(2\tau),0\\

\frac{4}{10}e+\frac{1}{2}e_{x} & \uparrow +\frac{1}{2}e_{x} & \uparrow -\frac{5}{20} & \frac{1}{2}\theta_{2}(\tau) & \frac{1}{2}\zeta_{16}^{1,-3}\theta_{2}\left(\frac{\tau+1}{2}\right) & \uparrow +\frac{1}{2}e_{x} & \uparrow  & \theta_{2,3}(4\tau) & 0,\theta_{4}(2\tau)
\end{array}
\end{equation*}
\end{center}
\label{RHStable}
\end{table}
\renewcommand{\arraystretch}{1}
\end{landscape}

\bibliographystyle{amsplain}
\bibliography{10th_Refs}

\end{document}